\newcommand{\erase}[1]{}
\theoremstyle{remark}
\newtheorem{remark}{Remark} 
\newtheorem{theorem}{Theorem}[section]
\newtheorem{definition}[theorem]{Definition}
\newtheorem{proposition}[theorem]{Proposition}
\newtheorem{corollary}[theorem]{Corollary}
\newtheorem{example}[theorem]{Example}
\newtheorem{lemma}[theorem]{Lemma}
\numberwithin{equation}{section}
\newcommand{\bp}{\begin{pmatrix}}
\newcommand{\ep}{\end{pmatrix}}
\newcommand{\bps}{\begin{smallmatrix}}
\newcommand{\eps}{\end{smallmatrix}}
\def\C{{\mathbb C}}
\def\R{{\mathbb R}}
\def\Z{{\mathbb Z}}
\def \0{{\bf 0}}
\def \1{{\bf 1}}
\def \mf#1#2#3#4{
\xymatrix{{#1}\  \ar@<0.4ex>[r]^{{#2}} & \ {#4}
\ar@<0.4ex>[l]^{{#3}}
}
}
\def \mfs#1#2#3#4{\!
\xymatrix@C=1.5em{{#1} \! \ar@<0.2ex>[r]^{{#2}} & \! {#4}
\ar@<0.2ex>[l]^{{#3}}
}
\!}
\def \mfl#1#2#3#4{
\xymatrix@C=2.6em{{#1}\  \ar@<0.4ex>[r]^{{#2}} &\  {#4}
\ar@<0.2ex>[l]^{{#3}}
}
}
\def \mfss#1#2#3#4{\!
\xymatrix@C=1.5em{{#1} \ar@<0.3ex>[r]^{{#2}} & {#4}
\ar@<0.3ex>[l]^{{#3}}
}
\!}
\newcommand{\deeq}{\mathbin{\hbox{$=$ \lower 1.7pt\rlap{\hskip -8.5pt .}}}} 
\begin{document}
\title{Infinite examples of cancellative monoids that do not always have least common multiple.} 

\author{Tadashi Ishibe}
\begin{abstract}
We will study the presentations of fundamental groups 
of the complement of complexified real affine line arrangements that do not contain two parallel lines. By Yoshinaga's minimal presentation, we can give positive homogeneous presentations of the fundamental groups. We consider the associated monoids defined by the presentations. It turns out that, in some cases, left (resp. right) \emph{least common multiple} does not always exist. Hence, the monoids are neither \emph{Garside} nor \emph{Artin}. 
Nevertheless, we will show that they carry certain particular elements 
similar to the \emph{fundamental elements} in Artin monoids, and that, by improving the classical method in combinatorial group theory, they are \emph{cancellative monoids}. As a result, we will show that the word problem can be solved 
and the center of them are determined.
\end{abstract}

  







\maketitle

\section{Introduction} 
Early in $70's$ the braid groups are generalized to a wider class of groups, the fundamental groups of the regular orbit spaces of finite reflection groups (\cite{[B]}), which are called either the \emph{Artin group} (\cite{[B-S]}) or the \emph{generalized braid group} (\cite{[De]}). In \cite{[B]}, E. Brieskorn gave a presentation of the fundamental groups by certain positive homogeneous relations, called Artin braid relations. The monoid defined by that presentation is called \emph{Artin monoid} of finite type. In \cite{[B-S]}, by refering to the method in \cite{[G]}, they showed that the Artin monoid is \emph{cancellative} (i.e. $axb = ayb$ implies $x = y$ ) and that, for any two elements in the monoid, left (resp. right) common multiples exist. Hence, due to the \"Ore's criterion, the Artin monoid of finite type injects in the corresponding Artin group. Furthermore, they showed that, for any two elements, left (resp. right) least common multiple exists (see \cite{[B-S]} \S4). By using this property, they defined a particular element $\Delta$, the \emph{fundamental element}, in the monoid. By using the injectivity and the existence of this element $\Delta$, they solved the word and conjugacy problem in the Artin groups of finite type and determined the center of them.\par 
After this work, in the late $90's$, the notion of Artin group (resp. Artin monoid) is generalized by French mathematicians (\cite{[D-P]}, \cite{[D1]}), which is called the \emph{Garside group} (resp. \emph{Garside monoid}).
The Garside group is defined as the group of fractions of a Garside monoid. A Garside monoid is a finitely generated monoid that satisfies the following conditions: $\mathrm{i})$ the monoid is cancellative; $\mathrm{ii})$ \emph{atomic} (i.e. the expressions of a given element have bounded lengths); $\mathrm{iii})$ left (resp. right) least common multiples exist; $\mathrm{iv})$ a \emph{Garside element} exists. Hence, the Garside monoid trivially satisfies the \"Ore's criterion. We note that, under the assumption that the monoid is atomic and cancellative, an element $\Delta$ in the monoid is a Garside element if and only if $\Delta$ is a \emph{fundamental element} (Proposition 2.2). For Garside group, the word problem can be solved. Moreover, the conjugacy problem can be solved (\cite{[P]}, \cite{[Ge]}), by improving the method in \cite{[G]} and \cite{[E-M]}.\par
 Since the condition $\mathrm{iii})$ is a strong assumption, some Zariski-van Kampen monoids do not satisfy the condition $\mathrm{iii})$ (\cite{[B-M]}\cite{[I1]}\cite{[I2]}\cite{[S-I]}). As far as we know, for non-abelian positive homogeneously presented monoids that do not satisfy the condition $\mathrm{iii})$, there are few examples for which the cancellativity of them has been shown, since the pre-existing technique to show the cancellativity is not perfect (\cite{[G]}\cite{[B-S]}\cite{[D2]}\cite{[D3]}). We have an important remark on the method in \cite{[D2]}, \cite{[D3]}. If presentation of a positive homogeneously presented monoid satisfies some condition, called \emph{completeness}, the cancellativity of it can be trivially checked. However, in general, the presentation of a monoid is not complete. When the presentation is not complete, in order to obtain a complete presentaion, some procedure, called \emph{completion}, is carried out. From our experience, for most of non-abelian monoids that do not satisfy the condition $\mathrm{iii})$, these procedures do not finish in finite steps. Since, for monoids of this kind, nothing is discussed in \cite{[D2]}, \cite{[D3]}, we need to improve the technique to show the cancellativity. On the other hand, the presentations of the examples $G^{+}_{\mathrm{B_{ii}}}$ (\cite{[I1]}), $G^{+}_{m, n}$ (\S3), $G^{+}_n$ and $H^{+}_n$ (\cite{[I2]}) are not complete and the procedures do not finish in finite steps. Nevertheless, in \cite{[I1]}, for the monoid, called the type $\mathrm{B_{ii}}$, that does not satisfy only the condition $\mathrm{iii})$, the author has solved the word problem and the conjugacy problem, and determined the center of it by showing the monoid injects in the corresponding group.\par
 In this article, we will construct infinite examples that do not satisfy only the condition $\mathrm{iii})$. To obtain the infinite examples, we will study the presentations of the fundamental groups of the complement of complexified real affine line arrangements that do not contain two parallel lines (\S2). By Yoshinaga's minimal presentation, we can give positive homogeneous presentations of the fundamental groups. In Section 3, we will consider a special type of line arrangement. The line arrangement consists of $m+n+1$ real affine lines. We will compute the fundamental group of the complement of it's complexification by using Zariski-van Kampen method. The same presentation can be obtained by Yoshinaga's minimal presentation. It turns out that fundamental elements exist in the associated monoid defined by the presentation (Proposition 3.1). Moreover, we will show the cancellativity of it successfully, by improving the classical method in combinatorial group theory (for instance \cite{[G]}\cite{[B-S]}) (Proposition 4.3). Due to \"Ore's criterion, the associated monoid injects in the corresponding group (Proposition 5.1). As a result, some decision problems in the group can be solved (Proposition 5.2, 5.4). We remark that the fundamental group is isomorphic to 
\[
\Z \times F_{m} \times F_{n}.
\]
Hence, from a group theoretical point of view, we may say that this fundamental group is well-known. \par

\section{Positive Presentation}

In this section, we first recall from \cite{[B-S]} some basic definitions and notations. Secondly, for a positive  finitely presented group 
\[
G = \langle L \!\mid \!R\rangle, 
\]
we associate a monoid defined by it. We will extend a basic notion in \cite{[B-S]}, \emph{fundamental element}, for a positively presented atomic monoid. Lastly, by using a fundamental element $\Delta$ in the associated monoid, we will discuss the word problem in the group $G = \langle L \!\mid \!R\rangle$.

 Let $L$ be a finite set. Let $F(L)$ be the free group generated by $L$, and let $L^*$ be the free monoid generated by $L$ inside $F(L)$. We call the elements of $F(L)$ \emph{words} and the elements of $L^*$ \emph{positive words}. The empty word $\varepsilon$ is the identity element of $L^*$.
If two words $A$, $B$ are identical letter by letter, we write $A \equiv B$.
 Let $G = \langle L \!\mid \!R\rangle$ be a positive presented group (i.e. the set $R$ of relations consists of those of the form $R_i\!=\!S_i$ 
where 
$R_i$ and $S_i$ are positive words 
 ), where $R$ is the set of relations. We often denote the images of the letters and words under the quotient homomorphism 
$$\ F(L)\ \longrightarrow\ G$$
by the same symbols and the equivalence relation on elements $A$ and $B$ in $G$ is denoted by $A = B$.

 Secondly, we recall some terminologies and concepts on a monoid $M$. An element $U\!\in\! M$ is said 
to \emph{divide} $V\!\in\! M$ from the left (resp.~right), and 
denoted by $U|_lV$ (resp.~$U|_rV$), if there exists $W\!\in\! M$ 
such that $V\!=\! UW$ (resp.~$V\!=\! WU$). 
We also say that $V$ is \emph{left-divisible} (resp.\emph{right-divisible}) by $U$, or $V$ is 
a \emph{right-multiple} (resp.\emph{left-divisible}) of $U$. We say that $M$ \emph{admits the left {\rm (resp.} right{\rm )} 
divisibility theory}, if 
for any two elements $U,V$ in $M$, there always exists their 
left (resp.~right) least common multiple, i.e.~a left (resp.~right) common multiple that divides any other left (resp.~right) common multiple. 

Lastly, we consider two operations on the set of subsets of a monoid $M$. For a subset $J$ of $M$, we put
\[
\mathrm{cm}_{r}(J) := \{ u \in M \mid j \,|_l \,u ,\, \forall j \in J \},\,\,\,\,\,\,\,\,\,\,\,\,\,\,\,\,\,\,\,\,\,\,\,\,\,\,\,\,
\]
\[
\mathrm{min}_{r}(J) := \{ u \in J \mid \exists v \in J \,\,\mathrm{s.t.}\,\, v \,|_l \,u \Rightarrow v = u \},
\]
and their composition by
\[
\mathrm{mcm}_{r}(J) := \mathrm{min}_{r}(\mathrm{cm}_{r}(J)).
\]
Next, we recall from \cite{[S-I]}, \cite{[I1]} some terminologies and concepts 
on positive presented monoid. And we refer to some concepts from \cite{[D-P]}, \cite{[D1]}. \par 
\ \ \\
\begin{definition}
{\it Let 
$G = \langle L \!\mid \!R\rangle$
be a positive finitely presented group, where $L$ is the set of generators 
(called alphabet) and 
$R$ is the set of relations. 
Then we associate a monoid $G^+ = {\langle L \mid R\rangle}_{mo}$ defined as the quotient 
of the free monoid $L^*$ generated by $L$ by the equivalence relation $
$ defined as follows:  \par 
$\mathrm{i})$ two words $U$ and $V$ in $L^*$ are called \emph{elementarily 
equivalent} if either $U \equiv V$ or $V$ is obtained from $U$ by substituting 
a substring $R_i$ of $U$ by $S_i$ where $R_i\!=\!S_i$ is a relation of $R$ 
($S_i = R_i$ is also a relation if $R_i=S_i$ is a relation), \par 
$\mathrm{ii})$ two words $U$ and $V$ in $L^*$ are called \emph{equivalent}, denoted by $U \deeq V$, if there exists 
a sequence $U\! \equiv \!W_0, W_1,\ldots, W_n\! \equiv \!V $ of words in $L^*$ for $n\!\in\!\Z_{\ge0}$
such that $W_i$ is elementarily equivalent to $W_{i-1}$ for $i=1,\ldots, n$.\par

1. We say that $G^+$ is \emph{atomic}, if there exists a map:
\[
\nu\ : \ G^+\ \longrightarrow\ \Z_{\ge0}
\]
such that $\mathrm{i})$ $\nu(\alpha) = 0$\,$\Longleftrightarrow$\,$\alpha = 1$ and $\mathrm{ii})$ an inequality:
\[
\nu(\alpha\beta) \geq \nu(\alpha) + \nu(\beta)
\]
is satisfied for any $\alpha, \beta \in G^{+}$. If $G^+ = {\langle L \mid R\rangle}_{mo}$ is a positive homogeneously presented monoid (i.e. the set $R$ of relations consists of those of the form $R_i = S_i$ where $R_i$ and $S_i$ are positive words of the same length ), it is clear that $G^+$ is an atomic monoid. An element $\alpha \not= 1$ in $G^{+}$ is called an \emph{atom} if it is indecomposable, namely, $\alpha = \beta \gamma$ implies $\beta = 1$ or $\gamma = 1$.\par
2. We suppose that $G^{+}$ satisfies the condition of atomic monoid. Here, we write the set of generators $L$ by $\{ g_1, g_2, \ldots, g_{m} \}$. If, for some positive word $w(g_1, \ldots, g_{i-1}, g_{i+1}, \ldots, g_m)$ (i.e. a word that is written by the generators except $g_i$), $g_i = w(g_1, \ldots, g_{i-1}, g_{i+1}, \ldots, g_m)$ is a relation of $R$, then we call the generator $g_i$ a \emph{dummy generator}. We note that, in the set $R$, a relation that has a form of $g_i = w(g_1, \ldots, g_i, \ldots, g_m)$ must be the form $g_i = w(g_1, \ldots, g_{i-1}, g_{i+1}, \ldots, g_m)$ or a trivial form $g_i = g_i$, because we suppose here that $G^{+}$ is an atomic monoid. We denote by $L'$ the set of all dummy generators of the monoid $G^+$. We put $\widetilde{L} := L \setminus L'$. We note that, if $G^+$ is an atomic monoid, the image of the set $\widetilde{L}$ in $G^{+}$ is equal to the set of all the atoms.\par
3. We say that $G^+$ is \emph{cancellative}, if 
an equality $AXB \deeq \! AYB$  for\\
 $A, B, X, Y \!\in G^+$ implies $X \deeq \! Y$.

4. The natural homomorphism $\pi:  G^+\to G$ will be called the \emph{localization homomorphism}. 

5. An element $\Delta$$\in G^+$ is called a \emph{Garside element}
if the sets of left- and right-divisors of $\Delta$ coincide, generate $G^{+}$, and are finite in number.

6. An element $\Delta$ in an atomic monoid $G^+$ is called a \emph{fundamental element}
if there exists a permutation $\sigma_\Delta$ of $\widetilde{L}$ such that, for any $s \in \widetilde{L}$, there exists 
$\Delta_s$$\in G^+$ satisfying the following relation: 
\[
\Delta \deeq s \cdot \Delta_s \deeq  \Delta_s \cdot \sigma_\Delta(s).
\]
We note that, if the monoid $G^{+}$ is a cancellative monoid, there exists a unique permutation $\sigma_\Delta$ for a fundamental element $\Delta$.
We denote by $\mathcal{F}(G^+)$ the set of all fundamental elements of $G^+$. The order of an element $\sigma_{\Delta}$ in the permutation group $\mathfrak{S}(\widetilde{L})$ is denoted by $\mathrm{ord}(\sigma_{\Delta})$. 
Note that $\varepsilon \not\in \mathcal{F}(G^+)$. }



\end{definition}
\ \ \\
From the definitions, it follows that the notion of fundamental elements is equivalent to the notion of Garside elements. 
\ \ \\
\begin{proposition}
{\it  Let $G = \langle L \!\mid \!R\rangle$ be a positively presented group, and let $G^+ = {\langle L \mid R\rangle}_{mo}$ be the associated monoid. Assume that the monoid $G^+$
is an atomic, cancellative monoid.\\
Then, an element $\Delta$ in $G^+$ is a fundamental element if and only if $\Delta$ is a Garside element. }
\end{proposition}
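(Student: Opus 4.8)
The plan is to prove the two implications separately; the implication \emph{Garside $\Rightarrow$ fundamental} is the more delicate one, the reverse direction being mostly formal.

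\emph{Fundamental $\Rightarrow$ Garside.} Assume $\Delta$ is fundamental with permutation $\sigma_\Delta$ of $\widetilde{L}$. The first step is the conjugation identity: multiplying $s\cdot\Delta_s\deeq\Delta_s\cdot\sigma_\Delta(s)$ on the left by $s$ and using $\Delta\deeq s\Delta_s$ on both sides gives $s\Delta\deeq\Delta\,\sigma_\Delta(s)$ for every atom $s$. Since $\widetilde{L}$ generates $G^+$ (in an atomic monoid every element is a product of atoms, by induction on $\nu$), one extends $\sigma_\Delta$ multiplicatively, and cancellativity makes the result a well-defined monoid automorphism $\sigma$ of $G^+$ with $U\Delta\deeq\Delta\,\sigma(U)$ for all $U$ (well-definedness: two positive words for $U$ produce, after left-cancelling $\Delta$ from $U\Delta\deeq\Delta\cdot(\text{image word})$, equal images; injectivity and surjectivity of $\sigma$ follow from right-cancellation and from $\sigma_\Delta$ being onto $\widetilde{L}$). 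From $\Delta\deeq s\Delta_s$ every atom left-divides $\Delta$, and from $\Delta\deeq\Delta_{\sigma_\Delta^{-1}(s)}\cdot s$ every atom right-divides $\Delta$; hence the set $D_l$ of left-divisors of $\Delta$ and the set $D_r$ of right-divisors each contain $\widetilde{L}$ and therefore generate $G^+$. They are finite: if $U\in D_l$ then $\nu(U)\le\nu(\Delta)$, so every atom-factorization of $U$ has at most $\nu(\Delta)$ factors, and $\widetilde{L}$ is finite.

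It remains to prove $D_l=D_r$, which is where the identity $U\Delta\deeq\Delta\,\sigma(U)$ does its work. Given $\Delta\deeq UW$, substituting into $U\Delta\deeq\Delta\sigma(U)$ gives $U\,UW\deeq UW\,\sigma(U)$, and left-cancelling $U$ gives $\Delta\deeq W\,\sigma(U)$; thus the left-complement $W$ of a left-divisor $U$ is itself a left-divisor, so (every right-divisor being such a complement) $D_r\subseteq D_l$. Applying instead the identity to $\sigma^{-1}(W)$ gives $\sigma^{-1}(W)\Delta\deeq\Delta W$, hence $\sigma^{-1}(W)\,UW\deeq UW\,W$, and right-cancelling $W$ gives $\Delta\deeq\sigma^{-1}(W)\,U$; thus the left-divisor $U$ is a right-divisor, so $D_l\subseteq D_r$. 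Therefore $D_l=D_r$ and $\Delta$ is a Garside element.

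\emph{Garside $\Rightarrow$ fundamental.} Write $D:=D_l=D_r$. Since $D$ generates $G^+$ and atoms are indecomposable, every atom lies in $D$ (a factorization of an atom into nontrivial elements of $D$ must have a single factor). Cancellativity makes the complement map $\partial\colon D\to D$, with $\partial(U)$ the unique element satisfying $\Delta\deeq U\,\partial(U)$, well defined; it is injective by right-cancellation and surjective because $D_r=D_l$, hence a bijection of $D$. For an atom $s$ put $\Delta_s:=\partial(s)$ and $\sigma_\Delta(s):=\partial(\Delta_s)=\partial^2(s)$; then $\Delta\deeq s\,\Delta_s$ and $\Delta\deeq\Delta_s\,\sigma_\Delta(s)$ hold by construction. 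Once one knows $\sigma_\Delta(s)$ is an atom, the fact that $s\mapsto\sigma_\Delta(s)$ permutes $\widetilde{L}$ is automatic: $\partial^2$ is a bijection of the finite set $D$ which then restricts to a self-map, hence a bijection, of the finite set $\widetilde{L}$.

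The main obstacle is precisely showing $\partial^2(s)$ is an atom. From $\Delta\deeq s\Delta_s\deeq\Delta_s\,\partial^2(s)$ one compares $\nu$-values: for the positive homogeneous presentations considered in this paper $\nu$ is word length and is additive, giving $\nu(\partial^2(s))=\nu(\Delta)-\nu(\Delta_s)=\nu(s)=1$, so $\partial^2(s)$ is an atom. (For an arbitrary atomic weight one must argue instead that a proper factorization of $\partial^2(s)$ would, using that $D$ is closed under complements, contradict the indecomposability of $s$.) Apart from this weight comparison, both directions are formal manipulations with the identity $U\Delta\deeq\Delta\sigma(U)$, cancellativity, and the bookkeeping of the divisor sets; this is the single point at which the atomicity hypothesis is genuinely needed.
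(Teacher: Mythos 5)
Your ``fundamental $\Rightarrow$ Garside'' direction is correct and is essentially the paper's argument in a different packaging: where the paper iterates the relation $\Delta \deeq s\Delta_s \deeq \Delta_s\sigma_\Delta(s)$ up to the order $N$ of $\sigma_\Delta$ to see that a left divisor of $\Delta$ is a right divisor, you encode the same computation in the automorphism $\sigma$ satisfying $U\Delta \deeq \Delta\,\sigma(U)$ and then get both inclusions $D_r\subseteq D_l$ and $D_l\subseteq D_r$ by cancelling; you also spell out generation and finiteness of the divisor sets, which the paper leaves implicit. No objection there.

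The one genuine problem is the step you yourself flag in the converse: proving that $\partial^2(s)$ is an atom. Your main argument uses that $\nu$ is additive and that atoms have $\nu=1$, but the proposition assumes only that $G^+$ is atomic, i.e.\ $\nu(\alpha\beta)\ge\nu(\alpha)+\nu(\beta)$, and it is not restricted to homogeneous presentations; from $\Delta\deeq s\Delta_s\deeq \Delta_s\,\partial^2(s)$ you therefore get only inequalities, not $\nu(\partial^2(s))=\nu(s)=1$. As written, the main text does not prove the stated proposition. Your parenthetical sketch, however, is exactly the right repair, and it closes in three lines: if $\partial^2(s)\deeq\alpha\beta$ with $\alpha,\beta\ne\varepsilon$, then $\Delta_s\alpha$ left-divides $\Delta$, hence (Garside) right-divides it, so $\Delta\deeq C\cdot\Delta_s\alpha$ with $C\ne\varepsilon$ (if $C=\varepsilon$, left-cancelling $\Delta_s\alpha$ in $\Delta_s\alpha\deeq\Delta_s\alpha\beta$ gives $\beta=\varepsilon$); next $C\Delta_s$ left-divides $\Delta$, hence right-divides it, so $\Delta\deeq E\cdot C\Delta_s$ with $E\ne\varepsilon$ (if $E=\varepsilon$, cancelling gives $\alpha=\varepsilon$); finally, right-cancelling $\Delta_s$ against $\Delta\deeq s\Delta_s$ yields $s\deeq EC$ with $E,C\ne\varepsilon$, contradicting the indecomposability of the atom $s$. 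Note that this completed version is in fact leaner than the paper's route, which first establishes a separate Claim (for distinct atoms $s_i\ne s_j$ the complement of one cannot occur as a factor of the complement of the other) and only then derives the contradiction. With the atom-ness step fixed in this way (or with the statement explicitly restricted to homogeneous presentations, which is not what is claimed), the rest of your argument, including the counting argument that $\partial^2$ then restricts to a permutation of $\widetilde{L}$, stands.
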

\begin{proof} Assume that $\Delta$ is a fundamental element. We put $N := \mathrm{ord}(\sigma_{\Delta})$. We decompose $\Delta$ into $U \cdot V$. We write $U$ and $V$ by $u_{1}u_{2} \cdots u_{k}$ and $v_{1}v_{2} \cdots v_{\ell}$ respectively $(u_{1}, u_{2}, \ldots, u_{k}, v_{1}, v_{2}, \ldots, v_{\ell} \in \widetilde{L})$. Since the monoid $G^{+}$ is a cancellative monoid, by the definition of $\Delta$ we have
\[
u_{1}u_{2} \cdots u_{k} \cdot v_{1}v_{2} \cdots v_{\ell} \deeq v_{1}v_{2} \cdots v_{\ell} \cdot \sigma_{\Delta}(u_{1})\sigma_{\Delta}(u_{2}) \cdots \sigma_{\Delta}(u_{k})
\]
\[
\deeq \sigma_{\Delta}(u_{1})\sigma_{\Delta}(u_{2}) \cdots \sigma_{\Delta}(u_{k}) \cdot \sigma_{\Delta}(v_{1})\sigma_{\Delta}(v_{2}) \cdots \sigma_{\Delta}(v_{\ell})\,\,\,\,\,\,\,\,\,\,\,\,\,\,\,\,\,\,\,\,\,\,\,
\]
\[
\,\,\,\,\,\,\,\,\,\,\,\,\,\,\,\,\,\deeq \sigma_{\Delta}^{N-1}(u_{1})\sigma_{\Delta}^{N-1}(u_{2}) \cdots \sigma_{\Delta}^{N-1}(u_{k}) \cdot \sigma_{\Delta}^{N-1}(v_{1})\sigma_{\Delta}^{N-1}(v_{2}) \cdots \sigma_{\Delta}^{N-1}(v_{\ell})
\]
\[
\deeq \sigma_{\Delta}^{N-1}(v_{1})\sigma_{\Delta}^{N-1}(v_{2}) \cdots \sigma_{\Delta}^{N-1}(v_{\ell}) \cdot u_{1}u_{2} \cdots u_{k}.\,\,\,\,\,\,\,\,\,\,\,\,\,\,\,\,\,\,\,\,\,\,\,\,\,\,\,\,\,\,\,\,\,\,\,\,\,\,\,
\]
Hence, the element $U$ is also a right divisor of $\Delta$.\par
Next, we assume that $\Delta$ is a Garside element. We recall that the set $\widetilde{L}$ is equal to the set of all the atoms. Here, we write $\widetilde{L}$ by $\{ s_1, s_2, \ldots, s_{m} \}$. Since $\Delta$ is a Garside element, for each $i \in \{ 1, 2, \ldots, m \}$, $s_i$ devides $\Delta$ from the left. Thus, we can associate a quotient $\Delta_{s_i}$ (i.e. $\Delta \deeq s_i \cdot \Delta_{s_i}$ holds). Since the monoid $G^{+}$ is a cancellative monoid, we remark that the element $\Delta_{s_i}$ can be determined uniquely. We show the following Claim.\\
{\bf Claim.} 
For arbitrary two atoms $s_i, s_j$$(i \not= j)$, $\Delta_{s_i}$ cannot be a substring of $\Delta_{s_j}$.
\begin{proof} We assume that there exist two words $w_1$ and $w_2$ such that $\Delta_{s_i}$ and $\Delta_{s_j}$ satisfy the following equation 
\[ 
\Delta_{s_i} \deeq w_1 \cdot \Delta_{s_j} \cdot w_2.
\] 
By substituting $\Delta_{s_i}$ by $w_1 \cdot \Delta_{s_j} \cdot w_2$, we have
\begin{equation}
 \Delta \deeq s_j \cdot \Delta_{s_j} \deeq s_i \cdot \Delta_{s_i} \deeq s_i \cdot w_1 \cdot \Delta_{s_j} \cdot w_2. 
\end{equation} 
We consider the following two cases.\par
Case 1: $w_2 = 1$\\
Due to the cancellativity, we have the following equation
\[
s_j \deeq s_i \cdot w_1.
\]
A contradiction.\par
Case 2: $w_2 \not= 1$\\
Since $\Delta$ is a Garside element, we say that, from (2.1), the element $s_i \cdot w_1 \cdot \Delta_{s_j}$ is also a right divisor. Hence, there exists a positive word $\widetilde{w_2} \not= 1$ such that 
\[
s_i \cdot w_1 \cdot \Delta_{s_j} \cdot w_2 \deeq \widetilde{w_2} \cdot s_i \cdot w_1 \cdot \Delta_{s_j}. 
\]
 Thus, due to the cancellativity, we have
\[
s_j \deeq \widetilde{w_2} \cdot s_i \cdot w_1.
\]
A contradiction. 
 \end{proof}
Since the monoid $G^{+}$ is a cancellative monoid, there exists a unique element $A$ such that
\begin{equation}
 \Delta \deeq s_i \cdot \Delta_{s_i} \deeq \Delta_{s_i} \cdot A. 
\end{equation} 
We write $A$ in the form $\alpha_{1}\alpha_{2} \cdots \alpha_{k}$ letter by letter $(\alpha_{1}, \alpha_{2}, \ldots, \alpha_{k} \in \widetilde{L})$. Assume that $k \geq 2$. Since $\Delta$ is a Garside element, we say that the element $\Delta_{s_i} \cdot \alpha_{1} \cdots \alpha_{k-1}$ is also a right divisor. Hence, there exists a positive word $B \not= 1$ such that 
\[
\Delta \deeq B \cdot \Delta_{s_i} \cdot \alpha_{1} \cdots \alpha_{k-1}. 
\]
Due to the Claim, we have a contradiction. Hence, we say that $k = 1$. From (2.2), there exists a unique permutation $\sigma_\Delta$ of $\widetilde{L}$ such that, for any $s \in \widetilde{L}$, the following relation holds: 
\[
\Delta \deeq s \cdot \Delta_s \deeq \Delta_s \cdot \sigma_\Delta(s).
\]
 \end{proof}
Lastly, we discuss the word problem in a positively presented group. 

\begin{lemma}
  Let $G = \langle L \!\mid \!R\rangle$ be a positively presented group, and let 
$G^+ = {\langle L \mid R\rangle}_{mo}$ be the associated monoid. Assume that the monoid $G^+$
is an atomic, cancellative monoid and $\mathcal{F}(G^+)
\not=\emptyset$. Then:  \par 
 
 (1)  The localization homomorphism $\pi:  G^+\to G$ is injective. \par 
 (2)  The word problem in $G$ is solvable
\end{lemma}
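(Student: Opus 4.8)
The plan is to deduce both statements from the classical Øre-type argument, using the fundamental element to supply the common multiples needed to verify the Øre condition. First I would fix a fundamental element $\Delta \in \mathcal{F}(G^+)$ with permutation $\sigma_\Delta$ of order $N$. The key structural fact I would extract is that $\Delta^N$ is central in $G^+$: iterating the defining relation $\Delta \deeq s\cdot\Delta_s \deeq \Delta_s\cdot\sigma_\Delta(s)$ shows, as in the proof of Proposition 2.2, that for every atom $s$ we have $s\cdot\Delta^N \deeq \Delta^N\cdot \sigma_\Delta^N(s) \deeq \Delta^N\cdot s$, and since $\widetilde L$ generates $G^+$ this gives $\alpha\Delta^N \deeq \Delta^N\alpha$ for all $\alpha\in G^+$. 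Moreover every atom left-divides $\Delta$, hence left-divides $\Delta^N$; by cancellativity and the symmetric computation every atom also right-divides $\Delta^N$. Consequently, for any $\alpha\in G^+$, writing $\alpha$ as a product of $\nu(\alpha)$ atoms and peeling them off one at a time (using centrality to move the accumulated powers of $\Delta^N$ out of the way), one shows $\alpha \,|_l\, \Delta^{N\nu(\alpha)}$ and $\alpha\,|_r\,\Delta^{N\nu(\alpha)}$.

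For (1), I would verify the left Øre condition: given $\alpha,\beta\in G^+$, choose $k$ large enough that both $\alpha$ and $\beta$ left-divide $\Delta^{k}$ (with $k$ a multiple of $N$), say $\Delta^k \deeq \alpha\alpha' \deeq \beta\beta'$; this is a common right-multiple, so $G^+$ satisfies the right-reversing/Øre condition on the left. Combined with cancellativity (assumed) and atomicity, Øre's theorem gives that $G^+$ embeds into its group of left fractions, and that this group of fractions is $G$ itself — because $G$ is \emph{presented} by $\langle L\mid R\rangle$ with the same positive relations, the universal property identifies $G$ with the Øre localization, and the localization homomorphism $\pi$ is precisely the canonical (injective) map $G^+\to G^+(G^+)^{-1}$. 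I would spell out this last identification carefully: any group in which the relations $R$ hold receives a map from $G^+$, and the Øre group is the initial such object among groups generated by the image of $G^+$, so it coincides with $G$.

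For (2), with $\pi$ injective the word problem in $G$ reduces to the word problem in $G^+$ together with the ability to clear denominators. Concretely: given a word $w$ in $L\cup L^{-1}$, I would use centrality of $\Delta^N$ and the fact that every atom right-divides some power of $\Delta^N$ to rewrite $w$ as $\Delta^{-Nr}\cdot u$ for some positive word $u$ and some $r\ge 0$ (each inverse generator $s^{-1}$ equals $\Delta^{-N}\cdot(\Delta^N s^{-1})$ and $\Delta^N s^{-1}$ is positive since $s\,|_r\,\Delta^N$; then push all the $\Delta^{-N}$ factors to the left using centrality). Two words $w,w'$ then represent the same element of $G$ iff the corresponding positive words $u,u'$ (after equalizing the exponents $r$ by multiplying by further central powers $\Delta^{N}$) are equal in $G^+$. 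Hence it suffices to decide equality in $G^+$; and since $G^+$ is atomic, two positive words of differing lengths (in a homogeneous presentation) or bounded length are distinguished by a finite search through the finitely many equivalence classes of words of that length under the relations $R$. This furnishes an algorithm, solving the word problem.

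The main obstacle I anticipate is the bookkeeping in (1): showing precisely that the Øre group of fractions of $G^+$ is canonically $G$ rather than merely some quotient of it, and that $\pi$ is the canonical fraction map. This requires invoking the universal property of the positive presentation and checking that the relations $R$ suffice to recover all relations holding in the fraction group — routine but needing care. The centrality of $\Delta^N$ and the divisibility of arbitrary elements into powers of $\Delta^N$ are the technical engine; everything else is a standard application of Øre's criterion given cancellativity and atomicity.
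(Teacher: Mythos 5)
Your proposal is correct and follows essentially the same route as the paper: part (1) is proved by showing every element left- and right-divides a sufficiently high power of $\Delta$ and invoking \"Ore's criterion, and part (2) by using the central element $\Lambda=\Delta^{\mathrm{ord}(\sigma_\Delta)}$ to clear inverses and reduce to equality of positive words in $G^+$, decided by a finite enumeration guaranteed by atomicity. Your extra detail on identifying $G$ with the group of fractions and on rewriting $s^{-1}$ as $\Delta^{-N}(\Delta^N s^{-1})$ merely fills in steps the paper leaves implicit.
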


\begin{proof}(1)  Let $\Delta \in \mathcal{F}(G^+)$ be a fundamental element. We can easily show that, for any $U \in G^+$, there exists a sufficiently large integer $\ell$ such that $U$ devides $\Delta^{\ell}$ from the left and the right. Hence, we show that the monoid $G^+$ satisfies \"Ore's condition (see \cite{[C-P]}). Therefore, the localization homomorphism $\pi$ is injective.\par
(2)  We put $\Lambda : = \Delta^{\mathrm{ord}(\sigma_{\Delta})}$, which belongs to the center $\mathcal{Z}(G^+)$ of the monoid $G^+$. 
For any two elements $U, V$ in $G$, there exists a non-negative integer $k$ in $\Z_{\ge0}$ such that both ${(\pi(\Lambda))}^{k} U$ and ${(\pi(\Lambda))}^{k} V$ are equivalent to positive words. Since the localization homomorphism $\pi$ is injective, there exists a unique element $U' \in G^+$ (resp. $V' \in G^+$) such that
\[
\pi(U')={(\pi(\Lambda))}^{k} U (\rm{resp}. \pi(V')={(\pi(\Lambda))}^{k} V).
\] 
 Therefore, we can show that $U = V$ can be shown in $G$ algorithmically if and only if $U' \deeq  V'$ can be shown in $G^+$ algorithmically. Because the monoid $G^+$ is an atomic monoid, we can obtain algorithmically all the possible expressions of two words  $U'$ and $V'$ in $G^+$ in a finite number of steps. Hence, by comparing two types of complete lists of all the possible expressions of words $U'$ and $V'$, we decide in a finite number of steps whether  $U' \deeq V'$ or not. Consequently, the word problem in $G$ can be solved.
\end{proof}
Here is an important observation on the existence of fundamental elements in the monoid associated with the presentation of fundamental group of the complement of line arranement that is given by Yoshinaga's minimal presentation (\cite{[Y]}). Let $\mathcal{A} = \{ \ell_1, \ell_2, \ldots, \ell_{N} \}$ be a real line arrangement in $\R^2$ that does not contain two parallel lines and is equipped with an oriented generic flag $\mathcal{F}^{0} \subset \mathcal{F}^{1} \subset \mathcal{F}^{2} = \R^2$. By Yoshinaga's minimal presentation, we give a positive homogeneous presentation of the fundamental group $\pi_1(M(\mathcal{A}))$
. Here, we write the generator system by $\{ \gamma_1, \gamma_2, \ldots, \gamma_{N} \}$. When we take the generic line $\mathcal{F}^{1}$ far away from all the intersection points, we can show that, in the finitely presented group, a cyclic defining relation $\left[ \gamma_{1}, \gamma_{2}, \ldots, \gamma_{N} \right]$ holds. As a corollary, we have the following statement.
\ \ \\
\begin{corollary}
{\it  An element $\Delta := \gamma_1 \gamma_2 \cdots \gamma_N$ in the associated monoid is a fundamental element. }
\end{corollary}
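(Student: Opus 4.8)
The plan is to verify directly the definition of a fundamental element (Definition 2.1) for $\Delta:=\gamma_1\gamma_2\cdots\gamma_N$, taking as given the cyclic relation $[\gamma_1,\gamma_2,\ldots,\gamma_N]$ recorded in the paragraph above. First I would pin down the set $\widetilde{L}$ of atoms. Yoshinaga's minimal presentation of $\pi_1(M(\mathcal{A}))$ is positive and homogeneous, so $G^+$ is atomic; moreover a dummy generator would, by homogeneity, force a relation $\gamma_i=\gamma_j$ with $i\neq j$, which cannot occur since the abelianization of $\pi_1(M(\mathcal{A}))$ is free of rank $N$ on the meridian classes. Hence $G^+$ has no dummy generators and $\widetilde{L}=\{\gamma_1,\ldots,\gamma_N\}$ is exactly the set of atoms; in particular $\Delta\not\equiv\varepsilon$, so the requirement $\varepsilon\notin\mathcal{F}(G^+)$ is not in the way.

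Next I would unpack the cyclic relation as the statement that in $G^+$ every cyclic rotation of $\gamma_1\cdots\gamma_N$ is equivalent to $\Delta$, that is,
\[
\gamma_k\gamma_{k+1}\cdots\gamma_N\gamma_1\cdots\gamma_{k-1}\ \deeq\ \gamma_1\gamma_2\cdots\gamma_N\qquad(1\le k\le N),
\]
the equivalences being produced by iterating the elementary moves packaged in $[\gamma_1,\ldots,\gamma_N]$, so that they hold at the level of $\deeq$ and not merely in the group $G$. Granting this, set $\sigma_\Delta:=\mathrm{id}_{\widetilde{L}}$ and, for each atom $s=\gamma_k$, put $\Delta_s:=\gamma_{k+1}\gamma_{k+2}\cdots\gamma_N\gamma_1\cdots\gamma_{k-1}\in G^+$, the cyclic tail of $\Delta$ following $\gamma_k$. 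The conclusion is then a two-line check: the rotation indexed by $k$ gives $\Delta\deeq s\cdot\Delta_s$, and the rotation indexed by $k+1$ (indices mod $N$) gives $\Delta\deeq\Delta_s\cdot s=\Delta_s\cdot\sigma_\Delta(s)$; since this holds for every $k$, $\Delta$ satisfies the defining property of a fundamental element with permutation $\sigma_\Delta=\mathrm{id}$ (in particular $\mathrm{ord}(\sigma_\Delta)=1$), i.e. $\Delta\in\mathcal{F}(G^+)$.

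The index bookkeeping above is routine, so the real obstacle is the input hidden in the second step: showing that the cyclic rotation relation genuinely holds \emph{in the monoid} $G^+$, i.e. that it is a consequence of Yoshinaga's relations under substitutions of positive words of equal length. This is precisely the assertion that ``a cyclic defining relation $[\gamma_1,\ldots,\gamma_N]$ holds'', and it is where the geometric hypothesis enters---pushing the generic line $\mathcal{F}^1$ far from every intersection point and tracking how the meridians $\gamma_i$ get conjugated, so that no use of inverses is needed. Once that input is available, the corollary follows at once.
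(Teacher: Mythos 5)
Your proposal is correct and follows essentially the same route as the paper: the paper's (implicit) proof is precisely that the cyclic relation $\left[\gamma_1,\ldots,\gamma_N\right]$, taken as holding among the defining relations when $\mathcal{F}^{1}$ is far from all intersection points, yields $\Delta \deeq \gamma_k\cdots\gamma_N\gamma_1\cdots\gamma_{k-1} \deeq (\gamma_{k+1}\cdots\gamma_{k-1})\cdot\gamma_k$ for every $k$, i.e.\ $\Delta\in\mathcal{F}(G^+)$ with $\sigma_\Delta=\mathrm{id}$ (exactly as in the paper's proof of Proposition 3.1 for $G^+_{m,n}$). Your extra remarks on the absence of dummy generators and on the relation holding at the level of $\deeq$ rather than merely in $G$ are sound clarifications of points the paper leaves implicit.
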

Due to the Lemma 2.3, if the cancellativity of the associated monoid is proved, we can solve the word problem in the presented group. Hence, to show the cancellativity of the associated monoids is important for an understanding of the corresponding fundamental groups. If the associated monoid is not a cancellative monoid (i.e. a relation $\alpha \beta \deeq \!\alpha \gamma$ holds but $\beta \deeq \!\gamma$ does not hold ), we add the relation $\beta \deeq\! \gamma$ to the list of original defining relations. Then, we expect that the new monoid is a cancellative monoid. Even if the new monoid is not a cancellative monoid, by adding more new relations to the list each time, we expect that, in a finite number of steps, we can find a cancellative monoid. Contrary to our expectation, there are  interesting examples, where the above process cannot finish in a finite number of steps.
\ \ \\
\begin{figure}
\begin{picture}
(190,220)(110,-30)
\put(112,85){\line(3,-1){197}}
\put(90,33){\line(1,0){219}}
\put(201,13){\line(-1,4){38}}
\put(94,25){\line(4,1){179}}
\put(187,159){\line(-1,-2){76}}
\put(140,161){\line(1,-1){150}}
\put(180,4){\line(0,1){10}}
\put(180,19){\line(0,1){10}}
\put(180,34){\line(0,1){10}}
\put(180,49){\line(0,1){10}}
\put(180,64){\line(0,1){10}}
\put(180,79){\line(0,1){10}}
\put(180,94){\line(0,1){10}}
\put(180,109){\line(0,1){10}}
\put(180,124){\line(0,1){10}}
\put(180,139){\line(0,1){10}}
\put(180,154){\line(0,1){10}}
\put(180,169){\line(0,1){10}}
\put(314,31){$\ell_{a}$}
\put(278,68){$\ell_{b}$}
\put(310,16){$\ell_{c}$}
\put(292,8){$\ell_{e}$}
\put(196,6){$\ell_{d}$}
\put(191,157){$\ell_{f}$}
\put(173,-6){$\mathcal{F}^{1}$}
\put(172,129){$\circle*{3}$}
\put(268,33){$\circle*{3}$}
\put(124,33){$\circle*{3}$}
\end{picture}
\caption{a line arrangement $\mathcal{A}_{6}$}
\label{fig1}
\end{figure}
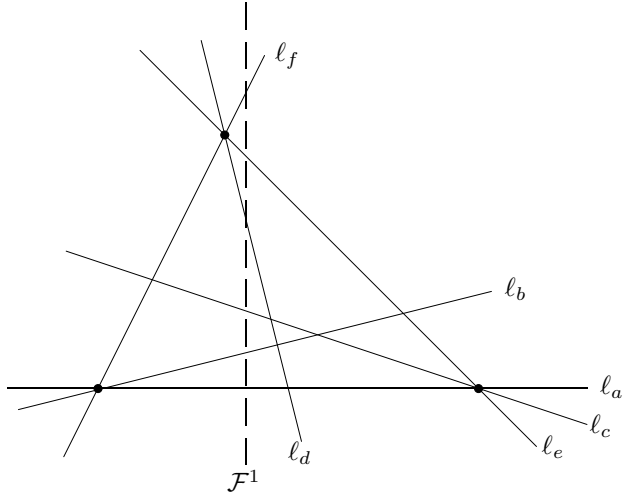
\begin{example}
Let $\mathcal{A}_{6}= \{ \ell_{a}, \ell_{b}, \ldots, \ell_{f} \}$ be the line arrangement that is written in Figure 1 and let $\mathcal{F}^{1}$ be a generic line. By Yoshinaga's minimal presentation, we give the following positive homogeneous presentation: 
\[\begin{array}{rlll}
  \pi_1(M(\mathcal{A}_{6})) 
&\cong&
\Biggl{\langle}\!
a, b, c, d, e, f\,
\biggl{|}
\begin{array}{lll}
  abf = bfa = fab, ace = cea = eac ,\\
 def = efd = fde, ad=da, cd=dc,\\
  bc=cb, bd=db, be=eb, cf=fc
 \end{array}\
\Biggl{\rangle}\ .
\end{array}\ 
\]
For the above positive homogeneous presented group, we associate the monoid 
$M_6$. We show the following Claim.\\
{\bf Claim.}  
In the monoid $M_6$, $cdea^{k}f \deeq cea^{k}fd$ holds but $dea^{k}f \deeq ea^{k}fd$ does not hold ($ k = 1, 2, \ldots $). Moreover, $bfe^{k}ac \deeq fe^{k}abc$ holds but $bfe^{k}a \deeq fe^{k}ab$ does not hold, and $cef^{k}ab \deeq ef^{k}acb$ holds but $cef^{k}a \deeq ef^{k}ac$ does not hold ($ k = 1, 2, \ldots $).
\begin{proof} In the monoid $M_6$, we have
\[
cdea^{k}f \deeq dcea^{k}f \deeq da^{k}cef \deeq a^{k}cdef \deeq a^{k}cefd \deeq cea^{k}fd. 
\]
However, we cannot show the relation $dea^{k}f \deeq ea^{k}fd$ by using only the above defining relations.
\end{proof}

\end{example}
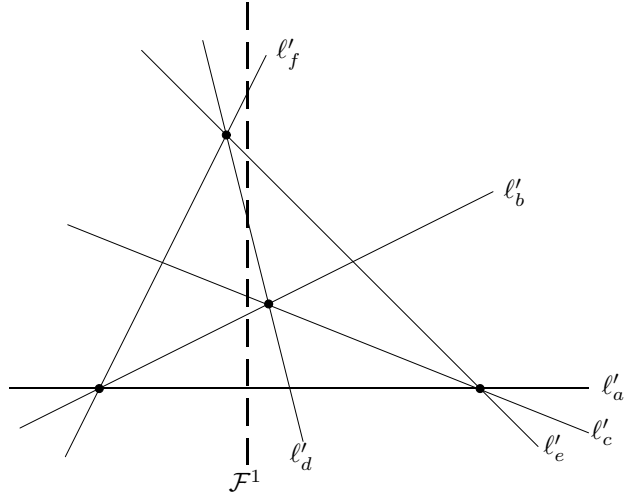
\begin{figure}
\begin{picture}(190,220)(120,-30)
\put(112,95){\line(5,-2){197}}
\put(90,33){\line(1,0){219}}
\put(201,13){\line(-1,4){38}}
\put(94,18){\line(4,2){179}}
\put(187,159){\line(-1,-2){76}}
\put(140,161){\line(1,-1){150}}
\put(180,4){\line(0,1){10}}
\put(180,19){\line(0,1){10}}
\put(180,34){\line(0,1){10}}
\put(180,49){\line(0,1){10}}
\put(180,64){\line(0,1){10}}
\put(180,79){\line(0,1){10}}
\put(180,94){\line(0,1){10}}
\put(180,109){\line(0,1){10}}
\put(180,124){\line(0,1){10}}
\put(180,139){\line(0,1){10}}
\put(180,154){\line(0,1){10}}
\put(180,169){\line(0,1){10}}
\put(314,31){$\ell'_{a}$}
\put(277,105){$\ell'_{b}$}
\put(310,14){$\ell'_{c}$}
\put(292,8){$\ell'_{e}$}
\put(196,5){$\ell'_{d}$}
\put(191,158){$\ell'_{f}$}
\put(173,-6){$\mathcal{F}^{1}$}
\put(172,129){$\circle*{3}$}
\put(268,33){$\circle*{3}$}
\put(124,33){$\circle*{3}$}
\put(188,65){$\circle*{3}$}
\end{picture}
\caption{a line arrangement $\mathcal{A'}_{6}$}
\label{fig2}
\end{figure}

\begin{example}
Let $\mathcal{A'}_{6}= \{ \ell'_{a}, \ell'_{b}, \ldots, \ell'_{f} \}$ be the line arrangement that is written in Figure 2 and let $\mathcal{F}^{1}$ be a generic line. By Yoshinaga's minimal presentation, we give the following positive homogeneous presentation: 
\[\begin{array}{rlll}
  \pi_1(M(\mathcal{A'}_{6})) 
&\cong&
\Biggl{\langle}\!
a, b, c, d, e, f\,
\biggl{|}
\begin{array}{lll}
  abf = bfa = fab, bcd = cdb = dbc ,\\
 def = efd = fde, ad=da, cf=fc, \\
   be=eb, abce=eabc, cdea=acde\\
  \end{array}\
\Biggl{\rangle}\ .
\end{array}\ 
\]
For the above positive homogeneous presented group, we associate the monoid $M'_6$. We immediately show the following Claim.\\
{\bf Claim.}  
In the monoid $M'_6$, $dbcefa \deeq dbefac$ holds but $cefa \deeq efac$ does not hold.
\begin{proof} In the monoid $M'_6$, we have
\[
dbcefa \deeq bcdefa \deeq bcfdea \deeq bfcdea \deeq bfacde \deeq fabcde \,\,\,\,\,\,\,
\]
\[
 \deeq fadbce \deeq fdabce \deeq fdeabc \deeq defabc \deeq debfac \deeq dbefac.
\]
However, we cannot show the relation $cefa \deeq efac$ by using only the defining relations of the monoid $M'_6$.
\end{proof}
We add the relation $cefa \deeq efac$ to the list of original defining relations. We consider the associated monoid $\widetilde{M'_{6}}$. Then, we can find the following infinite new relations.\\
{\bf Claim.}  
In the monoid $\widetilde{M'_{6}}$, $acde^{k+1}abf \deeq de^{k}aabcef$ holds but $acde^{k+1}ab \deeq de^{k}aabce$ does not hold ($ k = 1, 2, \ldots $). Moreover, $cefa^{k+1}cdb \deeq fa^{k}ccdeab$ holds but $cefa^{k+1}cd \deeq fa^{k}ccdea$ does not hold , and $eabc^{k+1}efd \deeq bc^{k}eefacd$ holds but $eabc^{k+1}ef \deeq bc^{k}eefac$ does not hold ($ k = 1, 2, \ldots $).
\begin{proof} In the monoid $\widetilde{M'_{6}}$, we have
\[
de^{k}aabcef \deeq de^{k}aeabcf \deeq de^{k}aeabfc \deeq de^{k}aebfac \deeq de^{k}abefac \deeq de^{k}abcefa
\]
\[
\deeq  dabce^{k}efa \deeq adbce^{k}efa \deeq acdbe^{k}efa \deeq acde^{k}ebfa \deeq acde^{k}eabf.\,\,\,\,\,\,\,\,\,\,\,\,\,\,\,\,\,\,\,\,\,\,\,\,\,\,\,\,\,\,
\]
However, we cannot show the relation $acde^{k+1}ab \deeq de^{k}aabce$ by using only the defining relations of the monoid $\widetilde{M'_{6}}$.
\end{proof}

\end{example}

\section{A Zariski-van Kampen presentation}

\begin{figure}
\begin{picture}(150,170)(120,0)
\put(112,85){\line(3,-1){197}}
\put(90,33){\line(1,0){219}}
\put(94,25){\line(4,1){179}}
\put(187,159){\line(-1,-2){76}}
\put(125,176){\line(1,-1){170}}
\put(133,4){\line(0,1){10}}
\put(133,19){\line(0,1){10}}
\put(133,34){\line(0,1){10}}
\put(133,49){\line(0,1){10}}
\put(133,64){\line(0,1){10}}
\put(133,79){\line(0,1){10}}
\put(133,94){\line(0,1){10}}
\put(133,109){\line(0,1){10}}
\put(133,124){\line(0,1){10}}
\put(133,139){\line(0,1){10}}
\put(133,154){\line(0,1){10}}
\put(133,169){\line(0,1){10}}
\put(79,30){$\ell_0$}
\put(278,67){$\ell_{1}^{+}$}
\put(192,162){$\ell_{m}^{+}$}
\put(98,87){$\ell_{1}^{-}$}
\put(109,176){$\ell_{n}^{-}$}
\put(8,33){\vector(1,0){40}}
\put(29,16){\vector(0,1){40}}
\put(101,16){$\circle*{1}$}
\put(98,19){$\circle*{1}$}
\put(104,13){$\circle*{1}$}
\put(298,11){$\circle*{1}$}
\put(301,14){$\circle*{1}$}
\put(304,17){$\circle*{1}$}
\put(61,7){$m$-lines}
\put(107,-4){the reference fiber}
\put(309,0){$n$-lines}
\put(90,37){P}
\put(270,37){Q}
\put(50,32){$x$}
\put(27,60){$y$}
\put(96,37){$(-a, 0)$}
\put(277,37){$(a, 0)$}
\put(268,33){$\circle*{3}$}
\put(124,33){$\circle*{3}$}
\put(297,8){\rotatebox{-45}{\huge{\}}}}
\put(87,5){\rotatebox{45}{\huge{\{}}}
\end{picture}
\caption{a line arrangement $\mathcal{A}_{m, n}$}
\label{fig3}
\end{figure}
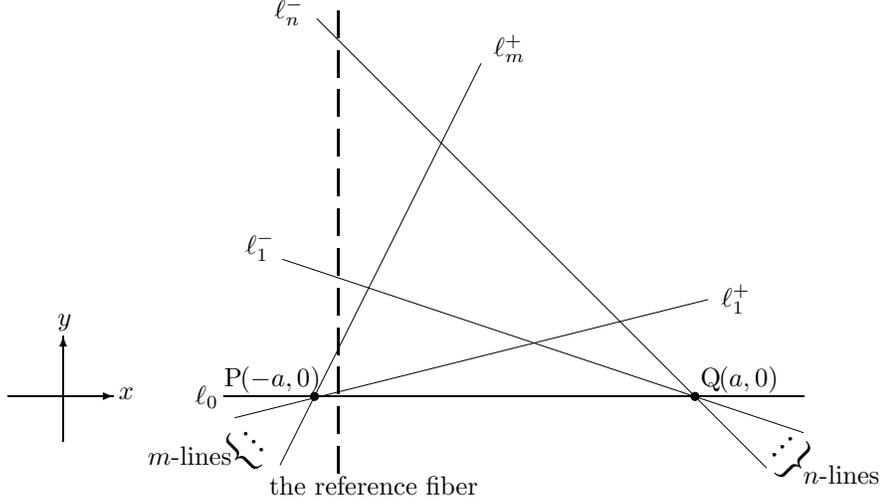
In this section, we give a Zariski-van Kampen presentation of the fundamental groups of the complement of a certain complexified real affine line arrangement. We easily show that the same presentation can be obtained by Yoshinaga's minimal presentation. Next, for the presented group, we associate a monoid defined by it. And we 
show the existence of a fundamental element in it. \par
Let $\mathcal{A}_{m, n} = \{ \ell_0, \ell_{1}^{+}, \ldots, \ell_{m}^{+}, \ell_{1}^{-}, \ldots, \ell_{n}^{-} \}$ be a real line arrangement in $\R^2$ with coordinates $(x, y)$ (see Figure 2).
The line $\ell_0$ denotes the horizontal line in Figure 2. And we fix two points $P (-a, 0)$ and $Q(a, 0)$ $(a>0)$ on the line $\ell_0$. For $i \in \{ 1, \ldots, m \}$, the line $\ell_{i}^{+}$ denotes the line that passes through the point $P$ and has a positive slope $k_{i}^{+}$ $(0 < k_{1}^{+} < \cdots < k_{m}^{+})$. And, for $i \in \{ 1, \ldots, n \}$, the line $\ell_{i}^{-}$ denotes the line that passes through the point $Q$ and has a negative slope $k_{i}^{-}$ $(0 > k_{1}^{-} > \cdots > k_{n}^{-})$. All the multiple points (i.e. points where more than two lines are intersected) are two points $P$ and $Q$. We consider its complexification $\mathcal{A}_{m, n}^{\C} = \{ \ell_{0} \otimes \C, \ell_{1}^{+} \otimes \C, \ldots, \ell_{m}^{+} \otimes \C, \ell_{1}^{-} \otimes \C, \ldots, \ell_{n}^{-} \otimes \C \}$. We set
\[
M(\mathcal{A}_{m, n}) = \C^2 - ((\ell_{0} \otimes \C) \cup (\bigcup_{i
=1}^{m} \ell_{i}^{+} \otimes \C ) \cup (\bigcup_{i
=1}^{n} \ell_{i}^{-} \otimes \C )).
\]
\par
\begin{figure}
\begin{picture}(150,170)(120,0)
\put(90,123){\vector(1,0){219}}
\put(207,0){\vector(0,1){180}}
\put(207,22){\line(3,5){59}}
\put(207,22){\line(2,5){39}}
\put(206,23){\line(-4,5){78}}
\put(207,24){\line(-1,5){19}}
\put(207,24){\line(-2,5){38}}
\put(204,15){\Large{*}}
\put(313,122){\large{Re}}
\put(203,184){\large{Im}}
\put(229,127){$\circle*{1}$}
\put(225,127){$\circle*{1}$}
\put(221,127){$\circle*{1}$}
\put(216,127){$\circle*{1}$}
\put(212,127){$\circle*{1}$}
\put(208,127){$\circle*{1}$}
\put(154,127){$\circle*{1}$}
\put(150,127){$\circle*{1}$}
\put(146,127){$\circle*{1}$}
\put(141,127){$\circle*{1}$}
\put(137,127){$\circle*{1}$}
\put(133,127){$\circle*{1}$}
\put(266,130){$s$}
\put(240,131){$t_1$}
\put(180,131){$t_m$}
\put(163,131){$u_1$}
\put(120,131){$u_n$}
\put(268,123){$\circle*{2}$}
\put(268,123){$\circle{8}$}
\put(267,127){$\vector(-1,0){1}$}
\put(245,123){$\circle*{2}$}
\put(245,123){$\circle{8}$}
\put(244,127){$\vector(-1,0){1}$}
\put(188,123){$\circle*{2}$}
\put(188,123){$\circle{8}$}
\put(187,127){$\vector(-1,0){1}$}
\put(167,123){$\circle*{2}$}
\put(167,123){$\circle{8}$}
\put(166,127){$\vector(-1,0){1}$}
\put(124,123){$\circle*{2}$}
\put(124,123){$\circle{8}$}
\put(123,127){$\vector(-1,0){1}$}

\end{picture}
\caption{a generator system}
\label{fig4}
\end{figure}
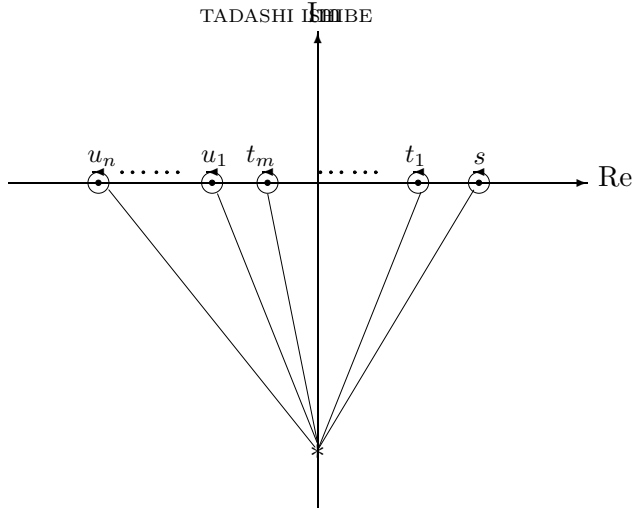
By using the Zariski-van Kampen method (see \cite{[Ch]}, \cite{[T-S]} for instance), we give a presentation of the fundamental group of the complement of the line arrangement $\mathcal{A}_{m, n}^{\C}$. We specify the technical data that are used in the computation. The dotted line in the Figure 3 denotes the reference fiber. And we have taken a generator system naturally in the reference fiber (see Figure 4). The presentation is the following:\par
\[\begin{array}{rlll}
 \pi_1(M(\mathcal{A}_{m, n})) 
&\cong&
\Biggl{\langle}\!
s,t_1, \ldots, t_m, u_1, \ldots, u_n
\biggl{|}
\begin{array}{lll}
 \left[ s, t_1, \ldots, t_m \right], \left[ s, u_1, \ldots, u_n \right], \\
\left[ t_i, u_j \right] (i=1, \ldots, m, j=1, \ldots, n) \\
  \end{array}\
\Biggl{\rangle}\ ,
\end{array}\
\]
$\mathrm{where a symbol}$ $\left[ x_{i_1}, x_{i_2}, \ldots, x_{i_k} \right]$ denotes the cyclic relations:
\[
x_{i_1} x_{i_2} \cdots x_{i_k} = x_{i_2} \cdots x_{i_k}x_{i_1} = x_{i_k}x_{i_1} \cdots x_{i_{k-1}}.
\]
We have a remark on the group $\pi_1(M(\mathcal{A}_{m, n}))$.
\ \ \\
\begin{remark}{\it  Let $\{ \overline{\ell_0}, \overline{\ell_{1}^{+}}, \ldots, \overline{\ell_{m}^{+}}, \overline{\ell_{1}^{-}} , \ldots, \overline{\ell_{n}^{-}} \}$ be projectivization of the line arrangement $\{ \ell_0, \ell_{1}^{+}, \ldots, \ell_{m}^{+}, \ell_{1}^{-}, \ldots, \ell_{n}^{-} \}$. We add the line at infinity  $\ell_{\infty}$ to the list. After carrying out a projective transformation of $\{ \overline{\ell_0}, \overline{\ell_{1}^{+}}, \ldots, \overline{\ell_{m}^{+}}, \overline{\ell_{1}^{-}} , \ldots, \overline{\ell_{n}^{-}}, \ell_{\infty} \}$ that transforms the line $\overline{\ell_0}$ to the position of the line at infinity, we consider an affinization of the arrangement. We write it by 
\[
\widetilde{\mathcal{A}_{m, n}} = \{ \widetilde{\ell_{1}^{+}}, \ldots, \widetilde{\ell_{m}^{+}}, \widetilde{\ell_{1}^{-}} , \ldots, \widetilde{\ell_{n}^{-}}, \widetilde{\ell_{\infty}} \}.
\]
 We say that
\[
\pi_1(M(\mathcal{A}_{m, n}))
\cong
\pi_1(M(\widetilde{\mathcal{A}_{m, n}})).
\]
By the theorem of Oka and Sakamoto (\cite{[O-S]}), we show the following
\[
\pi_1(M(\widetilde{\mathcal{A}_{m, n}}))
\cong
\Z \times F_{m} \times F_{n}.
\]
From a group theoretical point of view, this group is understood well. }
\end{remark}
\ \ \\
In this paper, we denote this presented group by $G_{m, n}$. For the presented group $G_{m, n}$, we associate the monoid $G_{m, n}^{+}$.\\
Next, we show the existence of a fundamental element in the monoid $G_{m, n}^{+}$.\par 
\begin{proposition}
{\it An element $\Delta := s \cdot t_1 \cdots t_m \cdot u_1 \cdots u_n$ in the monoid $G_{m, n}^{+}$ is a fundamental element.}
 \par 
\end{proposition}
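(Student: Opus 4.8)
The plan is to exhibit, for each generator $g$, an element $\Delta_g\in G_{m,n}^{+}$ with $\Delta\deeq g\,\Delta_g\deeq\Delta_g\,g$, so that $\Delta$ is a fundamental element whose associated permutation $\sigma_\Delta$ may be taken to be the identity. First I would record that, the presentation of $G_{m,n}$ being positive and homogeneous with both sides of every relation of the same length $\ge 2$, the monoid $G_{m,n}^{+}$ is atomic and has no dummy generators; hence $\widetilde{L}=\{s,t_1,\ldots,t_m,u_1,\ldots,u_n\}$, and each of these letters occurs exactly once in the word $W_0:=s\,t_1\cdots t_m\,u_1\cdots u_n$. The whole statement then reduces to a single claim: in $G_{m,n}^{+}$ every cyclic rotation of $W_0$ is $\deeq$-equivalent to $W_0$; equivalently, the total cyclic relation $[\,s,t_1,\ldots,t_m,u_1,\ldots,u_n\,]$ is a consequence of the defining relations. (Granting this claim, the Proposition is also an instance of Corollary 2.4, since the given presentation is Yoshinaga's minimal presentation with the $\gamma_i$ ordered $s,t_1,\ldots,t_m,u_1,\ldots,u_n$ along the generic flag; but the deduction below is just as short.)

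To prove the claim I would split according to the first letter of a cyclic rotation $w$ of $W_0$. If $w$ begins with $s$, then $w\equiv W_0$. If $w$ begins with some $t_i$, then $w\equiv t_i\cdots t_m\,u_1\cdots u_n\,s\,t_1\cdots t_{i-1}$; using $[\,s,u_1,\ldots,u_n\,]$ in the form $u_1\cdots u_n\,s\deeq s\,u_1\cdots u_n$ and then the commutations $[\,t_j,u_k\,]$ to slide the block $u_1\cdots u_n$ to the far right past $t_1\cdots t_{i-1}$, one obtains $w\deeq (t_i\cdots t_m\,s\,t_1\cdots t_{i-1})\,u_1\cdots u_n$; the parenthesised prefix is a cyclic rotation of $s\,t_1\cdots t_m$, hence equals $s\,t_1\cdots t_m$ by $[\,s,t_1,\ldots,t_m\,]$, and so $w\deeq W_0$. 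If $w$ begins with some $u_j$ the argument is the mirror image: one slides the block $t_1\cdots t_m$ to the right using $[\,s,t_1,\ldots,t_m\,]$ and the commutations, and then applies $[\,s,u_1,\ldots,u_n\,]$ to the remaining $(n+1)$-letter prefix. (Here one uses that the relation $[\,x_1,\ldots,x_k\,]$ identifies all $k$ cyclic rotations of $x_1\cdots x_k$, not merely the two displayed.)

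Finally I would conclude as follows. Writing $W_0\equiv A_g\,g\,B_g$ for the unique occurrence of a generator $g$, the rotation of $W_0$ starting at $g$ is $g\,B_g\,A_g$ and the rotation starting immediately after $g$ is $B_g\,A_g\,g$; by the claim both are $\deeq W_0=\Delta$. Setting $\Delta_g:=B_g\,A_g$ we therefore get $\Delta\deeq g\,\Delta_g\deeq\Delta_g\,g$ for every $g\in\widetilde{L}$, so $\Delta$ is a fundamental element with $\sigma_\Delta=\mathrm{id}$ (in particular $\mathrm{ord}(\sigma_\Delta)=1$ and $\Delta$ is central in $G_{m,n}^{+}$), and $\Delta\neq\varepsilon$. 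The only step carrying genuine content is the middle one — the bookkeeping that derives the long cyclic relation from the two short ones together with the commutation relations — and I expect that, though entirely routine, to be the main obstacle.
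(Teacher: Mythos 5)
Your proposal is correct and follows the same route as the paper: the paper's proof simply asserts that the defining relations yield the full cyclic relation $[\,s,t_1,\ldots,t_m,u_1,\ldots,u_n\,]$, from which the fundamental-element property of $\Delta$ (with $\sigma_\Delta=\mathrm{id}$) follows at once. Your write-up just makes explicit the two pieces the paper leaves implicit — the rotation-by-rotation derivation using $[\,s,t_1,\ldots,t_m\,]$, $[\,s,u_1,\ldots,u_n\,]$ and the commutations $[\,t_i,u_j\,]$, and the passage from the long cyclic relation to $\Delta\deeq g\,\Delta_g\deeq\Delta_g\,g$ for each generator $g$.
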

\begin{proof} By using the defining relations repeatedly, we show the cyclic relations $\left[ s, t_1, \ldots, t_m, u_1, \ldots, u_n \right]$. Hence, we show that $\Delta = s \cdot t_1 \cdots t_m \cdot u_1 \cdots u_n \in \mathcal{F}(G_{m, n}^+)$.

\end{proof}

\section{Cancellativity of the monoid $G_{m, n}^{+}$}

 In this section, we prove the cancellativity of the monoid $G_{m, n}^{+}$, by improving the classical method in combinatorial group theory (for instance \cite{[G]}\cite{[B-S]}).\par
Before continuing further, we prepare notation. We put 
\[
\Delta_1 := s \cdot t_1 \cdots t_m,\,\, \Delta_2 := s \cdot u_1 \cdots u_n,
\]
\[
I_1 := \{1, \ldots, m \},\,\, I_2 :=\{1, \ldots, n \},
\]
\[
L_{0} := \{\,s, t_1, \ldots, t_m, u_1, \ldots, u_n\, \}, L_1 := \{\, t_1, \ldots, t_m \, \},\,\, L_2 :=\{\, u_1, \ldots, u_n\, \},
\]
\[
F^{+}_1 := F^{+}(\underline{t}),\,\, F^{+}_2 := F^{+}(\underline{u}),
\]
\[
F^{+}_{1, \mathrm{rm}} := \{ w(\underline{t}) \in  F^{+}_1 \mid  (t_1 \cdots t_m) \not|_r w(\underline{t}) \},
\]
\[
F^{+}_{2, \mathrm{rm}} := \{ w(\underline{u}) \in  F^{+}_2 \mid  (u_1 \cdots u_n) \not|_r w(\underline{u}) \},
\]
\[
F^{+}_{1, \mathrm{cons}} := \{ w \in  F^{+}_1 \mid  \exists i_0, j_0 \in I_1 (i_0 \leq j_0)\,\, \mathrm{s.t.}\, w = t_{i_0}t_{i_0 + 1} \cdots t_{j_0}\},
\]
\[
F^{+}_{2, \mathrm{cons}} := \{ w \in  F^{+}_2 \mid  \exists i_0, j_0 \in I_2 (i_0 \leq j_0)\,\, \mathrm{s.t.}\, w = u_{i_0}u_{i_0 + 1} \cdots u_{j_0}\}.
\]
 For arbitrary element $w(\underline{t})$ in $F^{+}_{1}$ and $w(\underline{u})$ in $F^{+}_{2}$, we put
\[
\mathrm{Div}_1(w(\underline{t})) := \{ w \in  F^{+}_{1, \mathrm{cons}}  \mid  w \,|_r \,w(\underline{t}) \},
\]
\[
\mathrm{Div}_2(w(\underline{u})) := \{ w \in  F^{+}_{2
, \mathrm{cons}}  \mid  w \,|_r \,w(\underline{u}) \}.
\]
We remark that there exists a unique element $w_{0, 1}$ in $\mathrm{Div}_1(w(\underline{t}))$ (resp. $w_{0, 2}$ in $\mathrm{Div}_2(w(\underline{u}))$ ) such that $w_1 \,|_r \,w_{0, 1}$ for any element $w_1$ in $\mathrm{Div}_1(w(\underline{t}))$ (resp. $w_2 \,|_r \,w_{0, 2}$ for any element $w_2$ in $\mathrm{Div}_2(w(\underline{u}))$ ). We put
\[
\mathrm{C}(w(\underline{t})):= w_{0,1}, \mathrm{C}(w(\underline{u})):= w_{0,2}.
\]
In view of the defining relations of $G^+_{m, n}$, there exists an element  $w'(\underline{t})$ in $F^{+}_1$ (resp. $w'(\underline{u})$ in $F^{+}_2$) such that we have a decomposition $w(\underline{t}) \equiv w'(\underline{t}) \mathrm{C}(w(\underline{t}))$ (resp. $w(\underline{u}) \equiv w'(\underline{u}) \mathrm{C}(w(\underline{u})))$ in $G^+_{m, n}$. We put
\[
\mathrm{R}(w(\underline{t})):= w'(\underline{t}), \mathrm{R}(w(\underline{u})):= w'(\underline{u}).
\]
 For arbitrary element $w(\underline{t})$ in $F^{+}_{1, \mathrm{cons}}$ (resp. $w(\underline{u})$ in $F^{+}_{2, \mathrm{cons}}$), we say that, in the monoid $G^+_{m, n}$, $w(\underline{t}) \,|_r \,\Delta_1$  (resp. $w(\underline{u}) \,|_r \,\Delta_2$). Since the quotient can be uniquely determined respectively, we denote it by $\Delta_{1, w(\underline{t})}$ (resp. $\Delta_{2, w(\underline{u})}$).
\ \ \\
\begin{theorem}
{\it The monoid $G_{m, n}^{+}$ is a cancellative monoid. }
 \par 
\end{theorem}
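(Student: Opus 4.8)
The plan is to prove cancellativity of $G_{m,n}^+$ by the classical strategy of reducing to a local confluence/divisor-structure analysis, as in \cite{[G]}, \cite{[B-S]}, \cite{[I1]}. Because $G_{m,n}^+$ is homogeneous (hence atomic) and has a fundamental element $\Delta$ by Proposition 3.1, it suffices to show left-cancellativity (i.e.\ $sA \deeq sB$ implies $A\deeq B$ for a letter $s$), since right-cancellativity follows by the anti-automorphism that reverses words together with the symmetry of the relations under reversal, and full cancellativity then follows by an easy induction on word length. So the core task is: for each letter $g \in L_0$ and each pair of positive words $A,B$ with $gA \deeq gB$, derive $A\deeq B$.

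\smallskip

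The key step is to understand, for each letter $g$ and each positive word $W$, the structure of the set of positive words $W'$ with $gW' \deeq W$ — equivalently, to describe left-division by $g$ in $G_{m,n}^+$. The defining relations split the generating set as $L_0 = \{s\}\sqcup L_1 \sqcup L_2$ with $L_1,L_2$ commuting with each other, and the only non-commutation relations are the two cyclic relations $[s,t_1,\dots,t_m]$ and $[s,u_1,\dots,u_n]$, which are themselves the fundamental-type relations for $\Delta_1$ and $\Delta_2$. The idea is to put a word $W$ into a normal form driven by the data $\mathrm{C}(\cdot)$, $\mathrm{R}(\cdot)$, $\Delta_{1,\cdot}$, $\Delta_{2,\cdot}$ introduced just before the theorem: one wants to show that every positive word is equivalent to one in which the $t$-part and $u$-part are ``pushed'' into maximal consecutive blocks $t_{i_0}\cdots t_{j_0}$ (resp.\ $u_{i_0}\cdots u_{j_0}$) that can absorb into copies of $\Delta_1$ (resp.\ $\Delta_2$), with the remainder lying in $F^+_{1,\mathrm{rm}}$, $F^+_{2,\mathrm{rm}}$. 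Concretely I would establish a sequence of lemmas: (1) a commutation/rewriting lemma letting one move $L_1$- and $L_2$-letters past each other freely and letting one slide $s$ across a full block $t_1\cdots t_m$ or $u_1\cdots u_n$ via the cyclic relations; (2) a lemma showing that if $gA \deeq gB$ with $g\in L_1$ (the $L_2$ case being symmetric, and $g=s$ handled similarly via the two cyclic relations together), then using the commutation structure one can reduce to comparing the $\Delta_1$-absorption data and conclude $A\deeq B$; (3) assembling these into left-cancellativity for all three types of leading letter.

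\smallskip

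The main obstacle — and the reason the authors emphasize that the presentation is \emph{not complete} and the naive completion does \emph{not} terminate (cf.\ the infinite families of relations in Examples 2.6 and 2.8) — is exactly that one cannot simply invoke Dehornoy's completeness criterion (\cite{[D2]},\cite{[D3]}); the word-reversing diagrams do not close up in finitely many steps. So the hard part is the bookkeeping lemma that controls how an $L_1$-letter interacts with an embedded block that is in the process of absorbing into a power of $\Delta_1$: one must show that the only obstructions to confluence are the ``harmless'' ones coming from the cyclic relations $[s,t_1,\dots,t_m]$, and that these never destroy uniqueness of the quotient. This is where the improvement over \cite{[G]},\cite{[B-S]} is needed: rather than a finite local confluence check, one needs a global invariant — here, the functions $\mathrm{C},\mathrm{R}$ and the indices of the extremal $\Delta_1,\Delta_2$ blocks — that is provably well-defined on $\deeq$-classes and fine enough to separate $A$ from $B$ once the leading letter is cancelled.

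\smallskip

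In outline, then, the proof will run: first reduce to left-cancellativity by a symmetry argument; second, prove the free-commutation and $s$-sliding rewriting lemmas for $L_1$, $L_2$; third, prove that the block data $\mathrm{C}(w(\underline t))$, $\mathrm{R}(w(\underline t))$ and the $\Delta_1$-/$\Delta_2$-absorption are invariants of the $\deeq$-class and give a normal form for the ``$t$-part'' and ``$u$-part'' of any word; fourth, using this normal form, show that left-multiplication by each of $s$, a $t_i$, a $u_j$ is injective on $G_{m,n}^+$ by tracking what that letter does to the invariants; fifth, conclude cancellativity. I expect steps three and four to carry essentially all the difficulty, and to occupy the bulk of the section.
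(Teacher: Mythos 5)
Your proposal reproduces the paper's outer frame (reduce right to left cancellativity via the reversal anti-isomorphism composed with the index-reversing permutation, then analyse how a leading letter divides a word using the data $\mathrm{C}$, $\mathrm{R}$, $\Delta_{1,\cdot}$, $\Delta_{2,\cdot}$), but the engine that actually makes the argument close is missing. The paper does not build a normal form or prove that $\mathrm{C}$, $\mathrm{R}$ are invariants of $\deeq$-classes; instead it proves a much stronger simultaneous statement (Proposition 4.3, clauses (i)--(vi)) by a double induction on word length and chain length, in the style of Garside and Brieskorn--Saito. The decisive point, which your sketch does not confront, is that minimal common multiples here are \emph{not} unique: for instance $\mathrm{mcm}_{r}(\{t_1,t_2\})=\{\,w(\underline{u})\cdot\Delta_1 \mid w(\underline{u})\in F^{+}_{2,\mathrm{rm}}\,\}$ (Proposition 5.3). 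Consequently the correct inductive statement for an equation $t_iX \deeq w(\underline{t})Y^{(h)}$ with $t_i \not|_l\, w(\underline{t})$ must allow an arbitrary factor $w(\underline{u})\in F^{+}_{2,\mathrm{rm}}$ in front of the $\Delta_1$-data (clauses (v),(vi) of Proposition 4.3). Your step (2), which proposes to ``reduce to comparing the $\Delta_1$-absorption data and conclude,'' implicitly assumes a unique absorption datum and would break exactly at the mixed cases $t_iX \deeq u_jY$ and $t_iX \deeq t_jW \deeq t_iY'$, where the paper has to solve auxiliary equations such as $\Delta_{1,t_j}\cdot Z_3 \deeq w(\underline{u})\cdot\Delta_{1,t_j}\cdot Z_4$ inside the induction.

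The alternative route you propose --- a normal form together with invariants well defined on $\deeq$-classes, fine enough to separate $A$ from $B$ after cancelling the leading letter --- is not carried out: $\mathrm{C}$ and $\mathrm{R}$ are only defined on words in $F^{+}_1$ (resp.\ $F^{+}_2$), and you do not say how to extend them to arbitrary mixed words, nor how to verify invariance under a single application of a defining relation. That verification is precisely the local-confluence-type check that fails to terminate for this presentation (this is the content of Remark 4.4 and the infinite families in Examples 2.5, 2.6), so asserting well-definedness of the invariants is essentially asserting the theorem. In short: the plan names the right objects and the right obstacle, but the key idea of the paper --- the strengthened multi-clause induction hypothesis tolerating the non-uniqueness of minimal common multiples --- is absent, and the substitute you offer (normal form plus invariants) is left entirely unconstructed at the point where all the difficulty lies.
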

\begin{proof} First, we remark on the following.\par
\begin{proposition}
{\it The left cancellativity on $G_{m, n}^{+}$ implies the right cancellativity.} 
 \par 
\end{proposition}
\begin{proof} Consider a map
 $\varphi:G^{+}_{m, n}\rightarrow G^{+}_{m, n}$,
 $W\mapsto \varphi(W):=\sigma$$(rev(W))$, where $\sigma$ is a
 permutation $\big(^{\,s\,\, t_1\,\,\, \cdots \,\, t_m\,\,u_1\,\,\,\cdots \,\,u_n}_{\,s\,\, t_m\,\cdots \,\,\, t_1\,\,\,u_n\,\,\cdots\,\,\, u_1}\big)$ and $rev(W)$
 is the reverse of the word $W=x_1 x_2 \cdots x_k$ ($x_i$ is a letter) given by the word  $x_k  \cdots x_2 x_1$. In
 view of the defining relation of $G^+_{m, n}$, $\varphi$ is well-defined and is an anti-isomorphism. If $\beta \alpha \deeq \!\! \gamma \alpha$, then $\varphi(\beta \alpha) \deeq \! \varphi(\gamma \alpha)$, i.e.,  $\varphi(\alpha) \varphi(\beta) \deeq \varphi(\alpha)\varphi(\gamma)$. Using the left cancellativity, we obtain $\varphi(\beta) \deeq \! \varphi(\gamma)$ and, hence, $\beta \deeq \! \gamma $.

\end{proof}
The following is sufficient to show the left cancellativity on $G^+_{m, n}$.

\begin{proposition}
{\it Let $X$ and $Y$ be positive words in $G^+_{m, n}$ of length $r\in \Z_{\ge0}$ and let $Y^{(h)}$ be a positive word in $G^+_{m, n}$ of length $h \in \{\,0, \ldots, r \}$.
\smallskip
\\
{\rm (i)}\, If $vX \deeq \! vY$ for some $v \in L_0$, then $X \deeq \! Y$.\\
{\rm (ii)}\, If $t_{i} X \deeq u_{j}Y$ $(t_i \in L_1, u_j \in L_2)$, then $X \deeq u_{j} Z$, $Y \deeq t_{i} Z$ for some positive word $Z$.\\
{\rm (iii)}\, If $sX \deeq w(\underline{t}) Y^{(h)}$ for some positive word $w(\underline{t})$ of length $r-h+1$ in $F^{+}_{1}$, then $X \deeq \Delta_{1, s} \cdot \mathrm{R}(w(\underline{t})) \cdot Z$, $Y^{(h)} \deeq \Delta_{1, \mathrm{C}(w(\underline{t}))} \cdot Z$ for some positive word $Z$.\\
{\rm (iv)}\, If $sX \deeq w(\underline{u}) Y^{(h)}$ for some positive word $w(\underline{u})$ of length $r-h+1$ in $F^{+}_{2}$, then $X \deeq \Delta_{2, s} \cdot \mathrm{R}(w(\underline{u})) \cdot Z$, $Y^{(h)} \deeq \Delta_{2, \mathrm{C}(w(\underline{u}))} \cdot Z$ for some positive word $Z$.\\
{\rm (v)}\, If $t_{i}X \deeq w(\underline{t}) Y^{(h)}$ for some $t_i$ in $L_1$ and some positive word $w(\underline{t})$ of length $r-h+1$ in $F^{+}_{1}$ that satisfies $t_i \,\not|_l \,w(\underline{t})$, then there exists $w(\underline{u})$ in $F^{+}_{2, \mathrm{rm}}$ such that $X \deeq w(\underline{u}) \cdot \Delta_{1, t_{i}} \cdot \mathrm{R}(w(\underline{t})) \cdot Z$, $Y^{(h)} \deeq w(\underline{u}) \cdot \Delta_{1, \mathrm{C}(w(\underline{t}))} \cdot Z$ for some positive word $Z$.\\
{\rm (vi)}\, If $u_{i}X \deeq w(\underline{u}) Y^{(h)}$ for some $u_i$ in $L_2$ and some positive word $w(\underline{u})$ of length $r-h+1$ in $F^{+}_{2}$ that satisfies $u_i \,\not|_l \,w(\underline{u})$, then there exists $w(\underline{t})$ in $F^{+}_{1, \mathrm{rm}}$ such that $X \deeq w(\underline{t}) \cdot \Delta_{2, u_{i}} \cdot \mathrm{R}(w(\underline{u})) \cdot Z$, $Y^{(h)} \deeq w(\underline{t}) \cdot \Delta_{2, \mathrm{C}(w(\underline{u}))} \cdot Z$ for some positive word $Z$.}\\
 \par 
\end{proposition}
\begin{proof} We will show the general theorem, by refering to the double induction (see \cite{[G]}, \cite{[B-S]}, \cite{[S-I]} for instance). The theorem for positive words $X$, $Y$ of word-length $r$ and $Y^{(h)}$ of word-length $h \in \{\,0, \ldots, r \}$ will be refered to as $\mathrm{H}_{r, h}$. For arbitrary $h \in \{\,0, \ldots, r \}$, it is easy to show that, for $r = 0, 1$, $\mathrm{H}_{r, h}$ is true. If a positive word $U_1$ is transformed into $U_2$ by using $t$ single applications of the defining relations of $G^+_{m, n}$, then the whole transformation will be said to be of \emph{chain-length} $t$. For induction hypothesis, we assume\\
$(\mathrm{A})$\,\,$\mathrm{H}_{s, h}$ is true for $0$ $\leq$  $h$ $\leq$ $s$ $\leq$ $r$ for transformations of all chain-lengths, \\
and\\
$(\mathrm{B})$\,\,$\mathrm{H}_{r+1, h}$ is true for $0$ $\leq$ $h$ $\leq$ $r+1$ for all chain-lengths $\leq$ $t$.\\
We will show the theorem $\mathrm{H}_{r+1, h}$ for chain-lengths $t+1$. For the sake of simplicity, we devide the proof into two steps.\\
{\bf Step 1}.\,$\mathrm{H}_{r+1, h}$ for $h = r+1$\\
Let $X, Y'$ be of word-length $r+1$, and let
\[
v_1 X \deeq v_2 W_2 \deeq \cdots \,\deeq v_{t+1} W_{t+1} \deeq v_{t+2} Y'
\]
be a sequence of single transformations of $t+1$ steps, where $v_1, \ldots, v_{t+2} \in L_0$ and $W_2, \ldots , W_{t+1}$ are positive words of length $r+1$. By the assumption $t > 1$, there exists an index $\tau \in \{\,2, \ldots, t+1\}$ such that we can decompose the sequence into two steps
\[
v_1 X \deeq v_{\tau} W_{\tau} \deeq v_{t+2} Y',
\]
in which each step satisfies the induction hypothesis $(\mathrm{B})$.\par
If there exists $\tau$ such that $v_{\tau}$ is equal to either to $v_1$ or $v_{t+2}$, then by induction hypothesis, $W_{\tau}$ is equivalent either to $X$ or to $Y'$. Hence, we obtain the statement for the $v_1 X \deeq v_{t+2} Y'$. Thus, we assume from now on $v_{\tau} \not= v_1, v_{t+2}$ for $1 < \tau \leq t+1$.\par
Suppose $v_1 = v_{t+2}$. If there exists $\tau$ such that $(\,v_1 = v_{t+2}, v_{\tau}\, ) \not= (\,t_i, t_j\, ), (\,u_i, u_j\, )$, then each of the equivalences says the existence of $\alpha, \beta \in L_0$ and words $Z_1, Z_2$ such that $X \deeq \alpha Z_1$, $W_{\tau} \deeq \beta Z_1 \deeq \beta Z_2$ and $Y' \deeq \alpha Z_2$. Applying the induction hypothesis $(\mathrm{A})$ to $\beta Z_1 \deeq \beta Z_2$, we get $Z_1 \deeq Z_2$. Hence, we obtain the statement $X \deeq \alpha Z_1 \deeq \alpha Z_2 \deeq Y'$. Thus, we exclude these cases from our considerations. Next, we consider the case $(\,v_1 = v_{t+2}, v_{\tau}\, ) = (\,t_i, t_j\, )$. However, because of the above consideration, we have only the case $v_2, \ldots, v_{t+1} \in L_1$. Hence, we consider the case $\tau = 1$, namely
\[
t_i X \deeq t_j W_1 \deeq t_i Y'.
\]
Applying the induction hypothesis $(\mathrm{B})$ to each step, we say that there exist words $Z_3, Z_4$ and $w(\underline{u})$ in $F^{+}_{2, \mathrm{rm}}$ such that
\[
X \deeq \Delta_{1, t_i}\cdot Z_3,\,\, W_1 \deeq \Delta_{1, t_j} \cdot Z_3,\,\,\,\,\,\,\,\,\,\,\,\,\,\,\,\,\,\,\,\,\,\,\,\,\,\,\,\,\,\,
\]
\[
W_1 \deeq w(\underline{u})\cdot \Delta_{1, t_j}\cdot Z_4,\,\, Y' \deeq w(\underline{u})\cdot \Delta_{1, t_i} \cdot Z_4.
\]
Moreover, we say that 
\[
\Delta_{1, t_j} \cdot Z_3 \deeq w(\underline{u})\cdot \Delta_{1, t_j}\cdot Z_4.\, \cdots\,(\ast) 
\]
By induction hypothesis, we have
\[
s \cdot t_1 \cdots t_{j-1} \cdot Z_3 \deeq w(\underline{u})\cdot s 
\cdot t_1 \cdots t_{j-1}\cdot Z_4.
\]
We consider the case $w(\underline{u}) \not= \varepsilon$. Applying the induction hypothesis to this equation, we say that there exists a word $Z_5$ such that 
\[
t_1 \cdots t_{j-1} \cdot Z_3 \deeq \Delta_{2, s} \cdot \mathrm{R}(w(\underline{u})) \cdot Z_5,
\]
\begin{equation}
 s \cdot t_1 \cdots t_{j-1} \cdot Z_4 \deeq \Delta_{2, \mathrm{C}(w(\underline{u})) }\cdot Z_5.\,\,\,\,
\end{equation}
Moreover, we say that there exists a word $Z_6$ such that 
\[
Z_3 \deeq \Delta_{2, s} \cdot \mathrm{R}(w(\underline{u})) \cdot Z_6,
\]
\begin{equation}
 Z_5 \deeq t_1 \cdots t_{j-1} \cdot Z_6.\,\,\,\,\,\,\,\,\,\,\,\,\,
\end{equation}
Applying (4.2) to the equation (4.1), we have
\begin{equation}
s \cdot t_1 \cdots t_{j-1} \cdot Z_4 \deeq \Delta_{2, \mathrm{C}(w(\underline{u})) }\cdot t_1 \cdots t_{j-1} \cdot Z_6.
\end{equation}
We consider the following two cases.\par
Case 1: $\mathrm{C}(w(\underline{u})) \deeq u_a \cdots u_n$ for some integer $a \ge 2$\\
From (4.3), we obtain $Z_4 \deeq u_1 \cdots u_{a-1} \cdot Z_6$. Then, we have
\[
X \deeq \Delta_{1, t_i} \cdot \Delta_{2, s} \cdot \mathrm{R}(w(\underline{u})) \cdot Z_6 \deeq \mathrm{R}(w(\underline{u})) \cdot \Delta_{1, t_i} \cdot \Delta_{2, s}\cdot Z_6,\,\,\,\,\,\,\,\,
\]
\[
 Y' \deeq w(\underline{u}) \cdot \Delta_{1, t_i} \cdot u_1 \cdots u_{a-1} \cdot Z_6 \deeq \mathrm{R}(w(\underline{u})) \cdot \Delta_{1, t_i} \cdot \Delta_{2, s}\cdot Z_6.
\]
\par
Case 2: $\mathrm{C}(w(\underline{u})) \deeq u_a \cdots u_b$ for some integers $a, b \,\,(2 \leq a \leq b < n)$\\
We consider the equation 
\[
s \cdot t_1 \cdots t_{j-1} \cdot Z_4 \deeq u_{b+1} \cdots u_n \cdot s \cdot u_1 \cdots u_{a-1} \cdot t_1 \cdots t_{j-1} \cdot Z_6.
\]
By applying the induction hypothesis to this equation, we say that there exists a word $Z_7$ such that 
\[
t_1 \cdots t_{j-1} \cdot Z_4 \deeq \Delta_{2, s} \cdot Z_7,\,\,\,\,\,\,\,\,\,\,\,\,\,\,\,\,\,\,\,\,\,\,\,\,\,\,\,\,\,\,\,\,\,\,\,\,\,\,\,\,\,\,\,\,\,\,\,\,\,\,\,\,\,\,
\]
\begin{equation}
s \cdot u_1 \cdots u_{a-1} \cdot t_1 \cdots t_{j-1} \cdot Z_6 \deeq s \cdot  u_1 \cdots u_{b}  \cdot Z_7.
\end{equation}
Moreover, we say that there exists a word $Z_8$ such that 
\begin{equation}
Z_4 \deeq \Delta_{2, s} \cdot Z_8,\,\, Z_7 \deeq  t_1 \cdots t_{j-1} \cdot Z_8.
\end{equation}
Applying (4.5) to the equation (4.4), we have
\[
Z_6 \deeq  u_a \cdots u_b \cdot Z_8.
\]
Then, we have
\[
X \deeq \Delta_{1, t_i} \cdot \Delta_{2, s} \cdot \mathrm{R}(w(\underline{u})) \cdot u_a \cdots u_b \cdot Z_8 \deeq \mathrm{R}(w(\underline{u})) \cdot \Delta_{1, t_i} \cdot \Delta_{2, s} \cdot \mathrm{C}(w(\underline{u})) \cdot Z_8,
\]
\[
Y' \deeq w(\underline{u}) \cdot \Delta_{1, t_i} \cdot \Delta_{2, s} \cdot Z_8 \deeq \mathrm{R}(w(\underline{u})) \cdot \Delta_{1, t_i} \cdot \Delta_{2, s} \cdot \mathrm{C}(w(\underline{u})) \cdot Z_8.\,\,\,\,\,\,\,\,\,\,\,\,\,\,\,\,\,\,\,\,\,\,\,\,\,\,\,\,\,\,\,\,\,
\]
In the case of $(\,v_1 = v_{t+2}, v_{\tau}\, ) = (\,u_i, u_j\, )$, we can prove the statement in a similar manner.
\par
Suppose $v_1 \not= v_{t+2}$. We consider the following three cases.
\par
Case 1: $(\,v_1, v_{t+2}\, ) = (\,t_i, t_k\, ), (\,u_i, u_k\, )$\\
We consider the case $(\,v_1, v_{t+2}\, ) = (\,t_i, t_k\, )$. Then, we can easily show the case $v_{\tau} = s, u_j$. Thus, we have only the case $v_2, \ldots, v_{t+1} \in L_1$. Hence, we consider the case $\tau = 1$, namely
\[
t_i X \deeq t_j W_1 \deeq t_k Y'.
\]
Applying the induction hypothesis to each step, we say that there exist words $Z_1, Z_{2}$ and $w(\underline{u})$ in $F^{+}_{2, \mathrm{rm}}$ such that
\[
X \deeq \Delta_{1, t_i}\cdot Z_1,\,\, W_1 \deeq \Delta_{1, t_j} \cdot Z_1,\,\,\,\,\,\,\,\,\,\,\,\,\,\,\,\,\,\,\,\,\,\,\,\,\,\,\,\,\,\,\,\,\,
\]
\[
W_1 \deeq w(\underline{u})\cdot \Delta_{1, t_j}\cdot Z_{2},\,\, Y' \deeq w(\underline{u})\cdot \Delta_{1, t_k} \cdot Z_{2}.
\]
Thus, we say that $\Delta_{1, t_j} \cdot Z_1 \deeq w(\underline{u})\cdot \Delta_{1, t_j}\cdot Z_{2}$. Since this equation has the same form as the equation $(\ast)$, we can find the solution in a similar way. Hence, we verify the statement in the case  $(\,v_1, v_{t+2}\, ) = (\,t_i, t_k\, )$. In the same way, we verify the statement in the case $(\,v_1, v_{t+2}\, ) = (\,u_i, u_k\,)$.
\par
Case 2: $(\,v_1, v_{t+2}\, ) = (\,s, t_j\, ), (\,s, u_j\,)$\\
We consider the case $(\,v_1, v_{t+2}\, ) = (\,s, t_j\,)$. If $v_{t+1} = t_i$, then, by applying the induction hypothesis, we easily show the statement. Thus, we consider the case $(\,v_1, v_{t+1}, v_{t+2}\,) = (\,s, u_i, t_j \,)$, namely
\[
s X \deeq u_i W_{t+1} \deeq t_j Y'.
\]
Applying the induction hypothesis to each step, we say that there exist words $Z_{1}$ and $Z_{2}$ such that
\[
X \deeq \Delta_{2, s}\cdot Z_{1},\,\, W_{t+1} \deeq \Delta_{2, u_i} \cdot Z_{1},
\]
\[
W_{t+1} \deeq t_j \cdot Z_{2},\,\, Y' \deeq u_i \cdot Z_{2}.\,\,\,\,\,\,\,\,\,\,\,\,\,\,\,\,
\]
Thus, we say that $\Delta_{2, u_i} \cdot Z_{1} \deeq t_j \cdot Z_{2}$. By applying the induction hypothesis, there exists a word $Z_{3}$ such that
\begin{equation}
Z_{2} \deeq u_{i+1} \cdots u_n \cdot Z_{3},\,\, s \cdot u_1 \cdots u_{i-1} \cdot Z_{1} \deeq t_j \cdot Z_{3}.
\end{equation}
By applying the induction hypothesis to the equation $(4.6)$, we say that there exists a word $Z_{4}$ such that 
\[
 u_1 \cdots u_{i-1} \cdot Z_{1} \deeq  \Delta_{1, s} \cdot Z_{4},\,\,Z_{3} \deeq \Delta_{1, t_j} \cdot Z_{4}.
\]
Moreover, we say that there exists a word $Z_{5}$ such that 
\[
Z_{1} \deeq \Delta_{1, s} \cdot Z_{5},\,\,Z_{4} \deeq u_1 \cdots u_{i-1} \cdot Z_{5}.
\]
Thus, we have
\[
X \deeq \Delta_{2, s} \cdot \Delta_{1, s} \cdot Z_{5} \deeq \Delta_{1, s} \cdot \Delta_{2, s} \cdot Z_{5},\,\,\,\,\,\,\,\,\,\,\,\,\,\,\,\,\,\,\,\,\,\,\,\,\,\,\,\,\,\,\,\,\,\,\,\,\,\,\,\,\,\,\,\,\,\,
\]
\[
Y' \deeq u_i \cdots u_{n} \cdot \Delta_{1, t_j} \cdot u_1 \cdots u_{i-1} \cdot Z_{5} \deeq \Delta_{1, t_j} \cdot \Delta_{2, s} \cdot Z_{5}.
\]
We verify the statement in the case $(\,v_1, v_{t+2}\,) = (\,s, u_j\,)$ in a similar manner.
\par
Case 3: $(\,v_1, v_{t+2}\, ) = (\,t_i, u_j\, )$\\
First, we assume that there exists an index $\tau$ such that $v_{\tau}$ is equal to $s$. Then, we consider the case $(\,v_1, v_{\tau}, v_{t+2}\, ) = (\,t_i, s, u_j\, )$, namely
\[
t_i X \deeq s W_{\tau} \deeq  u_j Y'.
\]
Applying the induction hypothesis to each step, we say that there exist words $Z_1$ and $Z_2$ such that
\[
X \deeq \Delta_{1, t_i} \cdot Z_1,\,\, W_{\tau} \deeq \Delta_{1, s} \cdot Z_1,
\]
\[
W_{\tau} \deeq \Delta_{2, s} \cdot Z_2,\,\, Y' \deeq \Delta_{2, u_j} \cdot Z_2.
\]
Moreover, we say that 
\[
\Delta_{1, s} \cdot Z_1 \deeq \Delta_{2, s} \cdot Z_2.
\]
Applying the induction hypothesis to this equation, we say that there exists a word $Z_3$ such that
\[
Z_1 \deeq \Delta_{2, s} \cdot Z_3,\,\,Z_2 \deeq \Delta_{1, s} \cdot Z_3.
\]
Thus, we have
\[
X \deeq \Delta_{1, t_i} \cdot \Delta_{2, s} \cdot Z_3 \deeq u_j \cdot u_{j+1} \cdots u_n \cdot \Delta_{1, t_i} \cdot u_1 \cdots u_{j-1} \cdot Z_3,
\]
\[
Y' \deeq \Delta_{2, u_j} \cdot \Delta_{1, s} \cdot Z_3 \deeq t_i \cdot u_{j+1} \cdots u_n \cdot \Delta_{1, t_i} \cdot u_1 \cdots u_{j-1} \cdot Z_3
.
\]
Thus, in the consideration of Case 3, we assume from now on $v_{\tau} \not= s$ for $1 < \tau \leq t+1$.
We consider the following three cases.
\par
Case 3 -- 1: $(\,v_1, v_2, v_{t+2}\,) = (\,t_i, t_k, u_j\,)$\\
We consider the case
\[
t_i X \deeq t_k W_{2} \deeq  u_j Y'.
\]
Applying the induction hypothesis to each step, we say that there exist words $Z_1$ and $Z_2$ such that
\[
X \deeq \Delta_{1, t_i} \cdot Z_1,\,\, W_{2} \deeq \Delta_{1, t_k} \cdot Z_1,
\]
\[
W_{2} \deeq u_j \cdot Z_2,\,\, Y' \deeq t_k \cdot Z_2.\,\,\,\,\,\,\,\,\,\,\,\,\,\,\,\,\,
\]
Moreover, we obtain an equation $\Delta_{1, t_k} \cdot Z_1 \deeq u_j \cdot Z_2$. Then, there exists a word $Z_3$ such that 
\[
Z_2 \deeq t_{k+1} \cdots t_{m} \cdot Z_3,\,\, s \cdot t_1 \cdots t_{k-1} \cdot Z_1 \deeq u_j \cdot Z_3.
\]
By the induction hypothesis, we say that there exists a word $Z_4$
\[
t_1 \cdots t_{k-1} \cdot Z_1 \deeq \Delta_{2, s} \cdot Z_4,\,\,Z_3 \deeq \Delta_{2, u_j} \cdot Z_4.
\]
Moreover, we say that there exists a word $Z_5$ such that
\[
Z_1 \deeq \Delta_{2, s} \cdot Z_5,\,\,Z_4 \deeq t_1 \cdots t_{k-1} \cdot Z_5.
\]
Thus, we have
\[
X \deeq \Delta_{1, t_i} \cdot \Delta_{2, s} \cdot Z_5 \deeq u_j \cdot u_{j+1} \cdots u_n \cdot \Delta_{1, t_i} \cdot u_1 \cdots u_{j-1} \cdot Z_5,\,\,\,\,\,\,\,\,\,\,\,\,\,\,\,\,\,\,\,\,\,\,\,\,\,\,\,\,\,\,\,\,\,\,\,\,\,\,\,\,\,\,\,\,\,\,\,\,\,\,\,\,\,\,\,\,
\]
\[
Y' \deeq t_k \cdot t_{k+1} \cdots t_m \cdot \Delta_{2, u_j} \cdot t_1 \cdots t_{k-1} \cdot Z_5 \deeq t_i \cdot u_{j+1} \cdots u_n \cdot \Delta_{1, t_i} \cdot u_1 \cdots u_{j-1} \cdot Z_5.
\]
\par
Case 3 -- 2: $(\,v_1, v_{t+1}, v_{t+2}\,) = (\,t_i, u_k, u_j\,)$\\
In the same way as the Case 3 -- 1, we verify the statement in this case.
\par
Case 3 -- 3: $(\,v_1, v_2, v_{t+1}, v_{t+2}\,) = (\,t_i, u_{j_1}, t_{i_1}, u_j\,)$\\
We consider the case
\[
t_i X \deeq t_{i_1} W_{t+1} \deeq  u_j Y'.
\]
Applying the induction hypothesis to each step, we say that there exist words $Z_1, Z_2$ and $w(\underline{u})$ in $F^{+}_{2, \mathrm{rm}}$ such that
\[
X \deeq w(\underline{u}) \cdot \Delta_{1, t_i} \cdot Z_1,\,\, W_{t+1} \deeq w(\underline{u}) \cdot \Delta_{1, t_{i_1}} \cdot Z_1,
\]
\[
W_{t+1} \deeq u_j \cdot Z_2,\,\, Y' \deeq t_{i_1} \cdot Z_2.\,\,\,\,\,\,\,\,\,\,\,\,\,\,\,\,\,\,\,\,\,\,\,\,\,\,\,\,\,\,\,\,\,\,\,\,\,\,\,\,\,\,\,\,\,\,\,\,\,\,
\]
Moreover, we say that $w(\underline{u}) \cdot \Delta_{1, t_{i_1}} \cdot Z_1 \deeq u_j \cdot Z_2$. And we say that there exists a word $Z_3$ such that
\[
Z_2 \deeq t_{i_{1}+1} \cdots t_m \cdot Z_3,\,\,w(\underline{u}) \cdot s \cdot t_{1} \cdots t_{i_{1}-1} \cdot Z_1 \deeq u_j \cdot Z_3.
\]
Here, we consider the case $u_j \not|_l \, w(\underline{u})$. By applying the induction hypothesis, we say that there exist a word $Z_4$ and $w(\underline{t})$ in $F^{+}_{1, \mathrm{rm}}$ such that
\[
s \cdot t_{1} \cdots t_{i_{1}-1} \cdot Z_1 \deeq w(\underline{t}) \cdot \Delta_{2, \mathrm{C}(w(\underline{u})) } \cdot Z_4,\,\,Z_3 \deeq w(\underline{t}) \cdot \Delta_{2, u_j} \cdot \mathrm{R}(w(\underline{u})) \cdot Z_4.
\]
By applying the induction hypothesis to this equation, we say that there exists a word $Z_5$ such that
\begin{equation}
t_{1} \cdots t_{i_{1}-1} \cdot Z_1 \deeq \Delta_{1, s} \cdot \mathrm{R}(w(\underline{t})) \cdot Z_5,\,\,\Delta_{2, \mathrm{C}(w(\underline{u})) } \cdot Z_4 \deeq \Delta_{1, \mathrm{C}(w(\underline{t})) } \cdot Z_5.
\end{equation}
We can find a general solution of the equation (4.7)
\[
Z_4 \deeq  \Delta_{1, s} \cdot \mathrm{C}(w(\underline{u})) \cdot Z_6,\,\,Z_5 \deeq  \Delta_{2, s} \cdot \mathrm{C}(w(\underline{t})) \cdot Z_6.
\]
Thus, we have
\[
X \deeq w(\underline{u}) \cdot \Delta_{1, t_i} \cdot \Delta_{2, s} \cdot t_{i_1} \cdots t_{m} \cdot w(\underline{t}) \cdot Z_6\,\,\,\,\,\,\,\,\,\,\,\,\,\,\,\,\,\,\,\,\,\,\,\,\,\,\,\,\,\,\,\,\,\,\,\,\,\,\,\,\,\,\,\,\,\,\,\,\,\,\,\,\,\,\,
\]
\[
\deeq u_j \cdot u_{j+1} \cdots u_n \cdot \Delta_{1, t_i} \cdot u_1 \cdots u_{j-1} \cdot w(\underline{u}) \cdot t_{i_1} \cdots t_m \cdot w(\underline{t}) \cdot Z_6,\,\,\,
\]
\[
Y' \deeq t_{i_1} \cdot t_{i_{1}+1} \cdots t_m \cdot w(\underline{t}) \cdot \Delta_{2, u_j} \cdot \mathrm{R}(w(\underline{u})) \cdot \Delta_{1, s} \cdot \mathrm{C}(w(\underline{u})) \cdot Z_6
\]
\[
\deeq t_i \cdot u_{j+1} \cdots u_n \cdot \Delta_{1, t_i} \cdot u_1 \cdots u_{j-1} \cdot w(\underline{u}) \cdot t_{i_1} \cdots t_m \cdot w(\underline{t}) \cdot Z_6.\,\,\,\,\,\,
\]
{\bf Step 2}.\,$\mathrm{H}_{r+1, h}$ for $0 \leq h \leq r+1$\\
We put $h' := r+1-h$. We will show the general theorem $\mathrm{H}_{r+1, h}$ by induction on $h'$. The case $h' = 0$ is proved in Step 1. We assume $h'=0, \ldots, r-h$. Let $X$ be of word-length $r+1$, and let $Y^{(h)}$ be of word-length $h$. We consider a sequence of single transformations of $t+1$ steps
\begin{equation}
v_1 X \deeq \cdots \,\deeq V \cdot Y^{(h)},
\end{equation}
where $v_1 \in L_0$ and $V$ is a positive word of length $r-h+2$. To show the theorem $\mathrm{H}_{r+1, h}$ (i.e. $h'=r-h+1$ ) for chain-lengths $t+1$, we discuss the cases $(v_1, V) = (s, w(\underline{t})\,), (s, w(\underline{u})\,), (t_i, w(\underline{t})\,), (u_i, w(\underline{u})\,)$. 
\par
Case 1:\,$(v_1, V) = (s, w(\underline{t})\,), (s, w(\underline{u})\,)$.\\
First, we discuss the case $(v_1, V) = (s, w(\underline{t})\,)$ and decompose $w(\underline{t})$ into $w_{1}(\underline{t}) \cdot t_{a}$ (i.e. $w(\underline{t}) \equiv  w_{1}(\underline{t}) \cdot t_{a}$ ). We consider the case
\[
s X \deeq \cdots \,\deeq w_{1}(\underline{t})\cdot t_a \cdot Y^{(h)}.
\]
Applying the induction hypothesis, we say that there exists a word $Z_1$ such that
\[
X \deeq \Delta_{1, s} \cdot \mathrm{R}(w_1(\underline{t}))\cdot Z_1,\,\, t_{a}\cdot Y^{(h)}  \deeq \Delta_{1, \mathrm{C}(w_1(\underline{t}))} \cdot Z_1.\, \cdots\,(\ast \ast) 
\]
We consider the following two cases.
\par
Case 1 -- 1:\,$\mathrm{C}(w_1(\underline{t})) \deeq t_b \cdots t_m$ for some integer $b \ge 2$\\
The equation is the following
\[
t_{a}\cdot Y^{(h)}  \deeq s \cdot t_1 \cdots t_{b-1} \cdot Z_1. 
\]
Applying the induction hypothesis, we say that there exists a word $Z_2$ such that
\[
Y^{(h)} \deeq \Delta_{1, t_{a}} \cdot Z_2,\,\, Z_1 \deeq t_b \cdots t_{m} \cdot Z_2. 
\]
Thus, we have
\[
X \deeq \Delta_{1, s} \cdot \mathrm{R}(w_1(\underline{t})) \cdot t_b \cdots t_{m} \cdot Z_2 \deeq  \Delta_{1, s} \cdot \mathrm{R}(w_1(\underline{t}) \cdot t_a) \cdot Z_2, 
\]
\[
Y^{(h)} \deeq \Delta_{1, t_{a}} \cdot Z_2 \deeq \Delta_{1, \mathrm{C}(w_1(\underline{t}) \cdot t_a)} \cdot Z_2.\,\,\,\,\,\,\,\,\,\,\,\,\,\,\,\,\,\,\,\,\,\,\,\,\,\,\,\,\,\,\,\,\,\,\,\,\,\,\,\,\,\,\,\,\,\,\,\,\,\,\,\,\,\,\,\,\, 
\]
\par
Case 1 -- 2:\,$\mathrm{C}(w_1(\underline{t})) \deeq t_b \cdots t_c$ for some integers $b, c \,\,(2 \leq b \leq c < m)$\\
The equation is the following
\[
t_{a}\cdot Y^{(h)}  \deeq t_{c+1} \cdots t_m \cdot s \cdot t_1 \cdots t_{b-1} \cdot Z_1. 
\]
We discuss the case $t_{a} \not= t_{c+1}$. Applying the induction hypothesis, we say that there exist a word $Z_2$ and $w(\underline{u})$ in $F^{+}_{2, \mathrm{rm}}$ such that
\[
Y^{(h)}  \deeq w(\underline{u}) \cdot \Delta_{1, t_{a}} \cdot Z_2,\,\, s \cdot t_1 \cdots t_{b-1} \cdot Z_1 \deeq w(\underline{u}) \cdot s \cdot t_1 \cdots t_{c} \cdot Z_2.
\]
Moreover, we say that there exists a word $Z_3$ such that
\[
t_1 \cdots t_{b-1} \cdot Z_1 \deeq \Delta_{2, s} \cdot \mathrm{R}(w(\underline{u})) \cdot Z_3,\,\, s \cdot t_1 \cdots t_{c} \cdot Z_2 \deeq \Delta_{2, \mathrm{C}(w(\underline{u}))} \cdot Z_3.
\]
We say that there exists a word $Z_4$ such that
\[
Z_1 \deeq \Delta_{2, s} \cdot \mathrm{R}(w(\underline{u})) \cdot Z_4,\,\, Z_3 \deeq t_1 \cdots t_{b-1} \cdot Z_4.
\]
Then, we have $s \cdot t_1 \cdots t_{c} \cdot Z_2 \deeq \Delta_{2, \mathrm{C}(w(\underline{u}))} \cdot t_1 \cdots t_{b-1} \cdot Z_4$.
\par
Case 1 -- 2 -- 1:\,$\mathrm{C}(w(\underline{u})) \deeq u_d \cdots u_n$ for some integer $d \ge 2$\\
The equation is the following
\[
s \cdot t_1 \cdots t_{c} \cdot Z_2 \deeq s \cdot u_1 \cdots u_{d-1} \cdot t_1 \cdots t_{b-1} \cdot Z_4.
\]
Moreover, we say
\[
t_b \cdots t_{c} \cdot Z_2 \deeq  u_1 \cdots u_{d-1} \cdot Z_4.
\]
By the induction hypothesis, we say that there exists a word $Z_5$ such that
\[
Z_2 \deeq u_1 \cdots u_{d-1} \cdot Z_5,\,\, Z_4 \deeq t_b \cdots t_{c}\cdot Z_5.
\]
Thus, we have
\[
X \deeq \Delta_{1, s} \cdot \mathrm{R}(w_1(\underline{t}))\cdot \Delta_{2, s} \cdot \mathrm{R}(w(\underline{u})) \cdot t_b \cdots t_{c}\cdot Z_5 \deeq \Delta_{1, s} \cdot w_1(\underline{t}) \cdot \Delta_{2, s} \cdot \mathrm{R}(w(\underline{u})) \cdot Z_5
\]
\[
\deeq \Delta_{1, s} \cdot \mathrm{R}(w_1(\underline{t})\cdot t_a) \cdot \Delta_{2, s} \cdot \mathrm{R}(w(\underline{u})) \cdot Z_5,\,\,\,\,\,\,\,\,\,\,\,\,\,\,\,\,\,\,\,\,\,\,\,\,\,\,\,\,\,\,\,\,\,\,\,\,\,\,\,\,\,\,\,\,\,\,\,\,\,\,\,\,\,\,\,\,\,\,\,\,\,\,\,\,\,\,\,\,\,\,\,\,\,\,\,\,\,\,\,\,\,\,\,\,\,\,\,\,\,\,\,\,\,\,\,\,\,\,\,\,\,\,\,\,\,\,\,\,\,\,\,
\]
\[
Y^{(h)} \deeq w(\underline{u}) \cdot \Delta_{1, t_{a}} \cdot u_1 \cdots u_{d-1} \cdot Z_5 \deeq \Delta_{1, t_{a}} \cdot \Delta_{2, s} \cdot \mathrm{R}(w(\underline{u})) \cdot Z_5,\,\,\,\,\,\,\,\,\,\,\,\,\,\,\,\,\,\,\,\,\,\,\,\,\,\,\,\,\,\,\,\,\,\,\,\,\,\,\,\,\,\,\,\,\,\,\,\,\,
\]
\[
\deeq \Delta_{1, \mathrm{C}(w_1(\underline{t})\cdot t_a)} \cdot \Delta_{2, s} \cdot \mathrm{R}(w(\underline{u})) \cdot Z_5.\,\,\,\,\,\,\,\,\,\,\,\,\,\,\,\,\,\,\,\,\,\,\,\,\,\,\,\,\,\,\,\,\,\,\,\,\,\,\,\,\,\,\,\,\,\,\,\,\,\,\,\,\,\,\,\,\,\,\,\,\,\,\,\,\,\,\,\,\,\,\,\,\,\,\,\,\,\,\,\,\,\,\,\,\,\,\,\,\,\,\,\,\,\,\,\,\,\,\,\,\,\,\,\,\,\,\,\,\,\,\,\,\,\,\,\,\,\,\,\,\,\,\,\,\,
\]
\par
Case 1 -- 2 -- 2:\,$\mathrm{C}(w(\underline{u})) \deeq u_e \cdots u_f$ for some integers $e, f \,\,(2 \leq e \leq f < n)$\\
The equation is the following
\[
s \cdot t_1 \cdots t_{c} \cdot Z_2 \deeq u_{f+1}\cdots u_{n} \cdot s \cdot u_1 \cdots u_{e-1} \cdot t_1 \cdots t_{b-1} \cdot Z_4.
\]
By the induction hypothesis, we say that there exists a word $Z_5$ such that
\[
t_1 \cdots t_{c} \cdot Z_2 \deeq \Delta_{2, s} \cdot Z_5,\,\,s \cdot u_1 \cdots u_{e-1} \cdot t_1 \cdots t_{b-1} \cdot Z_4 \deeq s \cdot u_1 \cdots u_f \cdot Z_5.
\]
Moreover, we say that $t_1 \cdots t_{b-1} \cdot Z_4 \deeq u_e \cdots u_f \cdot Z_5$. By the induction hypothesis, there exists a word $Z_6$ such that
\[
Z_4 \deeq u_e \cdots u_f \cdot Z_6,\,\,Z_5 \deeq t_1 \cdots t_{b-1} \cdot Z_6.
\]
Hence, we say $t_1 \cdots t_{c} \cdot Z_2 \deeq \Delta_{2, s} \cdot t_1 \cdots t_{b-1} \cdot Z_6$. By the induction hypothesis, we show
\[
t_b \cdots t_{c} \cdot Z_2 \deeq \Delta_{2, s} \cdot Z_6.
\]
We say that there exists a word $Z_7$ such that
\[
Z_2 \deeq \Delta_{2, s} \cdot Z_7,\,\, Z_6 \deeq t_b \cdots t_{c} \cdot Z_7.
\]
Thus, we have
\[
X \deeq \Delta_{1, s} \cdot \mathrm{R}(w_1(\underline{t})) \cdot \Delta_{2, s} \cdot \mathrm{R}(w(\underline{u}))\cdot u_e \cdots u_f \cdot t_b \cdots t_{c} \cdot Z_7\,\,\,\,\,\,\,\,\,\,\,\,\,\,\,\,\,\,\,\,\,\,\,\,\,\,\,\,\,\,\,\,\,\,\,\,\,\,\,\,\,\,\,\,\,\,\,\,\,\,\,\,\,\,\,\,\,\,\,\,\, 
\]
\[
\deeq \Delta_{1, s} \cdot w_1(\underline{t}) \cdot \Delta_{2, s} \cdot \mathrm{R}(w(\underline{u}))\cdot u_e \cdots u_f \cdot Z_7 \deeq \Delta_{1, s} \cdot \mathrm{R}(w_1(\underline{t})\cdot t_a) \cdot \Delta_{2, s} \cdot w(\underline{u}) \cdot Z_7, 
\]
\[
Y^{(h)} \deeq  w(\underline{u}) \cdot \Delta_{1, t_a} \cdot \Delta_{2, s} \cdot Z_7 \deeq \Delta_{1, \mathrm{C}(w_1(\underline{t})\cdot t_a)} \cdot \Delta_{2, s} \cdot w(\underline{u}) \cdot Z_7.\,\,\,\,\,\,\,\,\,\,\,\,\,\,\,\,\,\,\,\,\,\,\,\,\,\,\,\,\,\,\,\,\,\,\,\,\,\,\,\,\,\,\,\,\,\,\,\,\,\,
\]
We can verify the statement in the case $(v_1, V) = (s, w(\underline{u}))$ in a similar manner.
\par
Case 2:\,$(v_1, V) = (t_i, w(\underline{t})\,), (u_i, w(\underline{u})\,)$.\\
First, we discuss the case $(v_1, V) = (t_i, w(\underline{t})\,)$. And we decompose $w(\underline{t})$ into $w_{1}(\underline{t}) \cdot t_{a}$ (i.e. $w(\underline{t}) \equiv  w_{1}(\underline{t}) \cdot t_{a}$ ). We consider the case
\[
t_i X \deeq \cdots \,\deeq w_{1}(\underline{t})\cdot t_a \cdot Y^{(h)}.
\]
Applying the induction hypothesis, we say that there exist a word $Z_1$ and $w(\underline{u})$ in $F^{+}_{2, \mathrm{rm}}$ such that
\[
X \deeq w(\underline{u}) \cdot \Delta_{1, t_i} \cdot \mathrm{R}(w_1(\underline{t})) \cdot Z_1,\,\, t_a Y^{(h)} \deeq w(\underline{u}) \cdot \Delta_{1, \mathrm{C}(w_1(\underline{t}))} \cdot Z_1.
\]
By the induction hypothesis, we say that $w(\underline{u}) \,|_l \, Y^{(h)}$. Hence, we write $Y^{(h)} \deeq w(\underline{u})\cdot \widetilde{Y}^{(h)}$. Then, we consider an equation
\[
t_a \widetilde{Y}^{(h)} \deeq \Delta_{1, \mathrm{C}(w_1(\underline{t}))} \cdot Z_1.
\]
Since this equation has the same form as the equation $(\ast \ast)$, we can find the solution in a similar way. Hence, we verify the statement in the case  $(\,v_1, V\, ) = (\,t_i, \, w(\underline{t})\,)$. In the same way, we verify the statement in the case $(\,v_1, V\, ) = (\,u_i, \,w(\underline{u}) \,)$.
 \end{proof}
This completes the proof of Theorem 4.1.
 \end{proof}
\ \ \\
\begin{remark}{\it The presentation of the monoid $G^{+}_{m, n}$ is not \emph{complete} (\cite{[D2]}). Furthermore, we easily show that the process of \emph{completion} (\cite{[D2]}) does not finish in finite steps.}
\end{remark}
\section{Some decision problems on the group $G_{m, n}$}
 In this section, we will solve the word problem in the group $G_{m, n}$ and determine the center of it, by showing the monoid $G^{+}_{m, n}$ injects in the group $G_{m, n}$. \par 
 Since the existence of a fundamental element and the cancellativity of the monoid $G^{+}_{m, n}$ have be shown, we show the following by applying the Lemma 2.3 to this case.
\ \ \\
\begin{proposition}
{\it The localization homomorphism $\pi:  G^{+}_{m, n}\to G_{m, n}$ is injective.}\par 
\end{proposition}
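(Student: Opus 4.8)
The plan is to obtain Proposition 5.1 as an immediate corollary of Lemma 2.3. That lemma asserts the injectivity of the localization homomorphism $\pi$ for any positively presented group whose associated monoid is atomic, cancellative, and carries at least one fundamental element, so all that remains is to check these three hypotheses for $G^{+}_{m,n}$.

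First I would observe that $G^{+}_{m,n}$ is atomic. The presentation of $G_{m,n}$ recorded in Section 3 is positive homogeneous, since each defining relation --- the cyclic relations $\left[ s, t_1, \ldots, t_m \right]$ and $\left[ s, u_1, \ldots, u_n \right]$, together with the commutators $\left[ t_i, u_j \right]$ --- equates two positive words of the same length. Hence the word-length function supplies the map $\nu$ of Definition 2.1, and atomicity follows as remarked there. Second, the cancellativity of $G^{+}_{m,n}$ is precisely Theorem 4.1, which has just been proved. Third, $\mathcal{F}(G^{+}_{m,n})\neq\emptyset$ is Proposition 3.1, which exhibits $\Delta = s\cdot t_1 \cdots t_m \cdot u_1 \cdots u_n$ as a fundamental element.

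Feeding these three facts into Lemma 2.3 (1) yields the injectivity of $\pi$, which completes the proof. I expect no genuine obstacle at this stage: the substantive work --- establishing cancellativity via the improved double induction --- was carried out in Section 4. The one point worth recalling explicitly, since it is used inside the proof of Lemma 2.3, is that every $U\in G^{+}_{m,n}$ left- and right-divides a sufficiently high power $\Delta^{\ell}$ of the fundamental element; this is what makes the \"Ore condition applicable, and it follows formally from $\Delta$ being fundamental together with atomicity, by pushing generators through $\Delta$ using the permutation $\sigma_\Delta$ and absorbing lengths. Since Lemma 2.3 already packages all of this, the proof of Proposition 5.1 reduces to a citation of that lemma with the hypotheses verified as above.
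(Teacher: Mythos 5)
Your proposal is correct and follows exactly the paper's route: the paper derives Proposition 5.1 by applying Lemma 2.3, with atomicity coming from the positive homogeneous presentation, cancellativity from Theorem 4.1, and the fundamental element from Proposition 3.1. Nothing further is needed.
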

\par
\begin{proposition}
{\it The word problem in the group $G_{m, n}$ can be solved. }
 \par 
\end{proposition}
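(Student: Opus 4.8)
The plan is to invoke Lemma 2.3(2) directly. By Proposition 3.1 (Proposition~\ref{...} in the excerpt) the monoid $G^{+}_{m, n}$ carries the fundamental element $\Delta := s \cdot t_1 \cdots t_m \cdot u_1 \cdots u_n$, so $\mathcal{F}(G^{+}_{m, n}) \neq \emptyset$; since the presentation is positive homogeneous, $G^{+}_{m, n}$ is atomic (with $\nu$ the word-length); and by Theorem 4.1 the monoid $G^{+}_{m, n}$ is cancellative. Thus all three hypotheses of Lemma 2.3 are met, and part (2) of that lemma gives precisely that the word problem in $G_{m, n}$ is solvable.

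First I would recall that by Proposition 5.1 (which itself is just Lemma 2.3(1) applied to this monoid) the localization homomorphism $\pi : G^{+}_{m, n} \to G_{m, n}$ is injective. Then I would follow the argument sketched in the proof of Lemma 2.3(2): set $\Lambda := \Delta^{\mathrm{ord}(\sigma_{\Delta})}$, which lies in the center $\mathcal{Z}(G^{+}_{m, n})$, so that $\pi(\Lambda)$ is central in $G_{m, n}$ and its image is a positive word. Given two words $U, V$ representing elements of $G_{m, n}$, I would choose $k \in \Z_{\ge 0}$ large enough that $(\pi(\Lambda))^{k} U$ and $(\pi(\Lambda))^{k} V$ are each represented by positive words; by injectivity of $\pi$ there are unique $U', V' \in G^{+}_{m, n}$ with $\pi(U') = (\pi(\Lambda))^{k} U$ and $\pi(V') = (\pi(\Lambda))^{k} V$. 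Then $U = V$ in $G_{m, n}$ iff $U' \deeq V'$ in $G^{+}_{m, n}$, and the latter is decidable because atomicity bounds the lengths of all expressions, so one can enumerate the finitely many words equivalent to $U'$ and to $V'$ and compare.

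The only point requiring a brief justification beyond citing the lemma is the reduction to positive words, i.e. that for any $U$ there is a $k$ with $(\pi(\Lambda))^{k} U$ positive. This follows from the observation already used in the proof of Lemma 2.3(1): every generator is a divisor (from both sides) of some power of $\Delta$, hence of some power of $\Lambda$, so any negative letter appearing in $U$ can be absorbed by multiplying on the left by a suitable power of $\Lambda$ and rewriting inside $G^{+}_{m, n}$. I do not anticipate a genuine obstacle here — all the substantive work (the fundamental element in Proposition 3.1, the cancellativity in Theorem 4.1, the general solvability mechanism in Lemma 2.3) has already been done — so the proof is essentially a one-line application: the statement follows immediately from Lemma 2.3(2), Proposition 3.1, Theorem 4.1, and the atomicity of the positive homogeneous presentation.
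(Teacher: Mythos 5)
Your proposal is correct and matches the paper's own argument: Proposition 5.2 is obtained there exactly by applying Lemma 2.3(2) to $G^{+}_{m,n}$, with atomicity coming from the positive homogeneous presentation, the fundamental element from Proposition 3.1, and cancellativity from Theorem 4.1. Your recapitulation of the mechanism inside Lemma 2.3 (centrality of $\Lambda$, reduction to positive words, injectivity of $\pi$, finite enumeration of equivalent positive words) is consistent with what the paper does.
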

Thanks to Theorem 4.1, we can also show the following proposition.
\ \ \\
\begin{proposition}
{\it The monoid $G_{m, n}^+$ does not always have least common multiples. }
\end{proposition}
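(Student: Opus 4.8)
The plan is to produce a single pair of elements with no least common left multiple; the statement for right multiples then follows from the anti-isomorphism $\varphi$ of Proposition 4.2. Assume $m\ge 2$ and $n\ge 2$ (this already yields infinitely many monoids; the first condition guarantees that $t_2$ exists, the second that $u_1\in F^{+}_{2,\mathrm{rm}}$). I claim that the atoms $t_1$ and $t_2$ have no least common left multiple, the two competing common left multiples being $\Delta_1=s\,t_1t_2\cdots t_m$ and $u_1\Delta_1$. That both are common left multiples of $t_1$ and $t_2$ is immediate from the defining relations: by $[s,t_1,\dots ,t_m]$ one has $\Delta_1\deeq t_1\,(t_2\cdots t_m s)\deeq t_2\,(t_3\cdots t_m s\,t_1)$, so $t_1|_l\Delta_1$ and $t_2|_l\Delta_1$, and commuting $u_1$ past the leading $t$-letter by $[t_i,u_j]$ gives $u_1\Delta_1\deeq t_1\,u_1\,(t_2\cdots t_m s)\deeq t_2\,u_1\,(t_3\cdots t_m s\,t_1)$, so $t_1|_l u_1\Delta_1$ and $t_2|_l u_1\Delta_1$.

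Next I would use the obvious letter invariant. Every defining relation of $G^{+}_{m,n}$ leaves the multiset of letters of a word unchanged, so each $\alpha\in G^{+}_{m,n}$ has a well-defined content $\mathrm{c}(\alpha)$, and $U|_l V$ forces $\mathrm{c}(U)\le \mathrm{c}(V)$ (multiset inclusion) with equality only if $U\deeq V$. In particular $\Delta_1\not\deeq u_1\Delta_1$. More importantly, $\Delta_1\not|_l u_1\Delta_1$: otherwise $u_1\Delta_1\deeq\Delta_1 W$ with $\mathrm{c}(W)$ the single letter $u_1$, hence $W=u_1$ and $u_1\Delta_1\deeq\Delta_1 u_1$; but $\Delta_1 u_1\deeq s\,u_1\,t_1\cdots t_m$ (commute $u_1$ leftward through the $t_i$), so right cancelling $t_1\cdots t_m$ (Theorem 4.1) would give $u_1 s\deeq s\,u_1$, which is impossible since no defining relation can be applied to either of the words $su_1$, $u_1 s$ (every relation has length $\ge 3$ or involves a letter of $L_1$).

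Finally I would run the decisive step. Suppose $t_1$ and $t_2$ had a least common left multiple $L$. Then $L|_l\Delta_1$, so $\mathrm{c}(L)\le \mathrm{c}(\Delta_1)$. On the other hand $L$ is itself a common left multiple, say $L\deeq t_1X\deeq t_2Y$; since $t_1\not|_l t_2$, Proposition 4.3(v) applies with $w(\underline t)=t_2$ (for which $\mathrm{R}(t_2)=\varepsilon$, $\mathrm{C}(t_2)=t_2$ and $\Delta_{1,t_1}=t_2t_3\cdots t_m s$), giving $X\deeq w(\underline u)\,t_2t_3\cdots t_m s\,Z$ for some $w(\underline u)\in F^{+}_{2,\mathrm{rm}}$ and some positive word $Z$; hence $L\deeq t_1\,w(\underline u)\,t_2\cdots t_m s\,Z\deeq w(\underline u)\,\Delta_1\,Z$, so $\mathrm{c}(L)\ge \mathrm{c}(\Delta_1)$. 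Thus $\mathrm{c}(L)=\mathrm{c}(\Delta_1)$, and $L|_l\Delta_1$ then forces $L\deeq\Delta_1$. But then $\Delta_1=L|_l u_1\Delta_1$, contradicting the previous paragraph. Therefore $\{t_1,t_2\}$ has no least common left multiple, so $G^{+}_{m,n}$ does not always have least common multiples.

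The only delicate point is this last step, and in it the appeal to Proposition 4.3(v): that is precisely what shows that \emph{every} common left multiple of $t_1$ and $t_2$ already contains a whole copy of $\Delta_1$ preceded by an \emph{arbitrary} word $w(\underline u)\in F^{+}_{2,\mathrm{rm}}$. This built-in freedom is the obstruction — $\Delta_1$ is the unique common left multiple of minimal content, yet it fails to left divide the family $\{\,w(\underline u)\,\Delta_1 : w(\underline u)\in F^{+}_{2,\mathrm{rm}}\,\}$ because $\Delta_1$ does not commute with $u_1$. Everything else above is routine manipulation of the defining relations together with the cancellativity of Theorem 4.1.
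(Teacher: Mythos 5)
Your proof is correct (with the standing assumption $m,n\ge 2$, which is in fact needed and is implicit in the paper as well), and it takes essentially the same route as the paper: the paper's proof is just the terse assertion, via Theorem 4.1 (i.e.\ Proposition 4.3(v)), that $\mathrm{mcm}_{r}(\{t_1,t_2\})=\{\,w(\underline{u})\cdot\Delta_1 \mid w(\underline{u})\in F^{+}_{2,\mathrm{rm}}\,\}$, and your use of Proposition 4.3(v) together with the letter-content argument and the explicit check that $\Delta_1$ does not left-divide $u_1\Delta_1$ simply supplies the details that the paper leaves unwritten.
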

\begin{proof} Due to the Theorem 4.1, we say, for example,
\[
\mathrm{mcm}_{r}(\{ t_1, t_2 \}) = \{ w(\underline{u}) \cdot \Delta_1 \mid w(\underline{u}) \in F^{+}_{2, \mathrm{rm}} \}.
\]

\end{proof}
As a consequence of Proposition 5.3, the monoid $G_{m, n}^{+}$ is neither Garside nor Artin monoid. We have an important remark on the monoid $G_{m, n}^{+}$.
\ \ \\
\begin{remark}{\it  For each letter $v$ in $L_0$, both sides of the defining relations of $G^{+}_{m, n}$ contain the same number of the letter $v$. For arbitrary word $W$ in $G^{+}_{m, n}$, the number of the letter $v$ in $W$ ought to be preserved in the process of rewriting $W$.}
\end{remark}
\ \ \\
\begin{proposition}
{\it The center $\mathcal{Z}(G_{m, n})$  is isomorphic to $\Z$ and generated by $\Delta$.} 
 \par 
\end{proposition}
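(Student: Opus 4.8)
The plan is to prove that $\mathcal{Z}(G_{m,n})$ is infinite cyclic with generator $\Delta$, in two steps: first that $\langle\Delta\rangle\cong\Z$ is a central subgroup, and then that it exhausts the center.

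For the first step, I would use that the cyclic relation $\left[ s, t_1, \ldots, t_m, u_1, \ldots, u_n \right]$ holds in $G^{+}_{m,n}$ --- this is exactly what is established in the proof of Proposition 3.1 --- hence also in $G_{m,n}$. Then for every generator $v\in L_0$ one can write $\Delta = v\cdot w_v = w_v\cdot v$ in $G_{m,n}$, where $w_v$ is the cyclic shift of $\Delta$ that begins immediately after $v$; therefore $v\Delta = vw_vv = \Delta v$, and since $L_0$ generates $G_{m,n}$, $\Delta\in\mathcal{Z}(G_{m,n})$. (Equivalently, one checks that the permutation $\sigma_\Delta$ associated with the fundamental element $\Delta$ is the identity, so $\mathrm{ord}(\sigma_\Delta)=1$ and $\Delta = \Lambda\in\mathcal{Z}(G^{+}_{m,n})$ in the notation of Lemma 2.3, which yields centrality in the group of fractions via Proposition 5.1.) Moreover $\Delta$ has infinite order: $G^{+}_{m,n}$ is atomic and embeds in $G_{m,n}$ by Proposition 5.1, so $\nu(\Delta^{k}) = k(m+n+1) > 0$ forces $\Delta^{k}\neq 1$ for $k\geq 1$. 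Hence $\langle\Delta\rangle\cong\Z$ is a central subgroup of $G_{m,n}$.

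For the second step, I would invoke Remark 3.2, namely $G_{m,n}\cong\Z\times F_m\times F_n$. Since the geometry of $\mathcal{A}_{m,n}$ forces $m,n\geq 2$, the free groups $F_m$ and $F_n$ are centerless, so $\mathcal{Z}(G_{m,n})\cong\mathcal{Z}(\Z)\times\mathcal{Z}(F_m)\times\mathcal{Z}(F_n)\cong\Z$. Combined with the first step, $\mathcal{Z}(G_{m,n})$ is infinite cyclic; write $\mathcal{Z}(G_{m,n}) = \langle g\rangle$, so that $\Delta = g^{e}$ for some $e\geq 1$. To pin down $e=1$ I would abelianize: $G_{m,n}^{\mathrm{ab}}\cong\Z^{m+n+1}$, since all the cyclic and commutator relations become trivial, and the image of $\Delta = s\,t_1\cdots t_m\,u_1\cdots u_n$ is $(1,1,\ldots,1)$, a primitive vector. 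Thus $\Delta$ is not a proper power in $G_{m,n}$, which forces $e = 1$, and therefore $\mathcal{Z}(G_{m,n}) = \langle\Delta\rangle\cong\Z$.

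The one substantive input is Remark 3.2 (the Oka--Sakamoto isomorphism), which supplies a priori that the center is cyclic of rank one; granting it, the rest is routine. If one insisted on an argument internal to the monoid machinery developed here, the main obstacle would be to compute $\mathcal{Z}(G^{+}_{m,n})$ directly and show it equals $\{\Delta^{k}\mid k\geq 0\}$; one would then transfer to $G_{m,n}$ using Proposition 5.1 together with the fact (from the proof of Lemma 2.3) that every element of $G^{+}_{m,n}$ divides some power of $\Delta$. Proving that a central positive word must, after a suitable chain of the defining rewrites, begin and end with a full copy of $\Delta$ is the genuinely delicate combinatorial point such a self-contained argument would have to settle.
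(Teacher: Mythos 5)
Your proposal is correct, but it takes a genuinely different route from the paper. You outsource the hard part to the Oka--Sakamoto isomorphism $G_{m,n}\cong\Z\times F_m\times F_n$ (the Remark in \S 3): with $m,n\geq 2$ the factors $F_m,F_n$ are centerless, so the center is infinite cyclic, and you then identify the generator as $\Delta$ by centrality (from the full cyclic relation of Proposition 3.1, exactly as you argue), infinite order, and primitivity of the image $(1,\ldots,1)$ of $\Delta$ in the abelianization $\Z^{m+n+1}$ -- all of which is sound. The paper instead stays entirely inside the monoid machinery: using Theorem 4.1 it proves (Claim 1) that every nontrivial element of $\mathcal{Z}(G^{+}_{m,n})$ is left-divisible by $\Delta$, hence (Claim 2) that $\mathcal{Z}(G^{+}_{m,n})$ is the cyclic monoid generated by $\Delta$, and then transfers to the group by taking an arbitrary central $V\in G_{m,n}$, multiplying by $\Delta^{k}$ to land in the image of $\pi$, and invoking the injectivity of Proposition 5.1; this is precisely the ``genuinely delicate combinatorial point'' you flag at the end and decline to carry out. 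What each approach buys: yours is shorter and needs no new combinatorics, but it rests on an external theorem and on the hypothesis $m,n\geq 2$, which the paper never states explicitly (it is implicit in the requirement that $P$ and $Q$ be multiple points, and in Proposition 5.3's use of $t_1,t_2$; note the statement is genuinely false for $m=n=1$, where the group is $\Z^{3}$, so your caveat is not cosmetic -- the paper's proof tacitly needs it too). The paper's argument, by contrast, is self-contained, additionally computes $\mathcal{Z}(G^{+}_{m,n})$ itself, and serves the expository purpose of showing that Theorem 4.1 suffices to settle such questions without appealing to the known product structure.
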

\begin{proof} First, we prove the following two Claims.\par
{\bf Claim 1.}  
$\delta \in \mathcal{Z}(G^+_{m, n})\setminus \{\varepsilon\}$ $\Rightarrow$ $\Delta |_l \, \delta$.

\begin{proof} Thanks to the Theorem 4.1, it is easy to show that $\delta$ contains at least one letter except for the letter $s$. Hence, there exist a non-negative integer $k$ in $\Z_{\ge0}$ and a letter $v$ in $L_1 \cup L_2$ such that 
\[ 
\delta \deeq  s^{k} \cdot v \cdot d
\] 
for some positive word $d$. Since $\delta$ belongs to the center, an equation $s \cdot \delta \deeq \delta \cdot s$ holds. By using the cancellativity of the monoid $G^+_{m, n}$, we have
\[ 
s \cdot v \cdot d \deeq v \cdot d \cdot s. 
\] 
By the Theorem 4.1, we easily show that
\[ 
\Delta_{1, s} \,|_l \, \delta\,\,\, \mathrm{or}\,\,\,  \Delta_{2, s} \,|_l \,\delta. 
\] 
Without loss of generality, we assume that $\Delta_{1, s} \,|_l \, \delta$. Hence, there exists a positive word $\delta_1$ such that
\[
\delta \deeq \Delta_{1, s} \cdot \delta_1.
\]
We easily show that $\delta_1 \not= \varepsilon$ and $\delta_1$ contains at least one letter except for the letter $s$. Due to the cancellativity, we have $s \cdot \delta_1 \deeq \delta_1 \cdot s$. In the same way, we can show
\[
\Delta_{1, s} \,|_l \, \delta_1\,\,\, \mathrm{or}\,\,\,  \Delta_{2, s} \,|_l \,\delta_1.
\]
From the Theorem 4.1, we say that $\delta$ cannot be a power of $\Delta_{1, s}$. Hence, there exists a positive integer $j$ such that $\Delta^{j}_{1, s} \cdot \Delta_{2, s} \,|_l \, \delta$. Then, there exists a positive word $\delta_2$ such that\[
\delta \deeq \Delta^{j}_{1, s} \cdot \Delta_{2, s} \cdot \delta_2.
\]
We devide $\delta_2$ by $\Delta_{1, s}$ and $\Delta_{2, s}$ as much as we can. Namely, there exist positive integers $j_1, j_2$ and a positive word $\delta_3$ such that 
\[
\delta \deeq \Delta^{j_{1}}_{1, s} \cdot \Delta^{j_{2}}_{2, s} \cdot \delta_3 \,\,\mathrm{and}\,\, \Delta_{1, s}, \Delta_{2, s}\,\not|_l \, \delta_3. 
\]
We easily show that $\delta_3 \not= \varepsilon$. If $s \,\not|_l \, \delta_3$, then , from the above consideration, we show that
\[
\Delta_{1, s} \,|_l \, \delta_3\,\,\, \mathrm{or}\,\,\,  \Delta_{2, s} \,|_l \,\delta_3. 
\]
A contradiction. Hence, we say that $s \,|_l \, \delta_3$. Thus, we have
\[
\Delta |_l \, \delta.
\]
\end{proof}
{\bf Claim 2.}  
The center $\mathcal{Z}(G^{+}_{m, n})$ is isomorphic to an infinite cyclic monoid and generated by $\Delta$.
\begin{proof} We take an element $\delta$ in $\mathcal{Z}(G^+_{m, n})\setminus \{\varepsilon\}$. By applying the Claim 1 repeatedly, we say that there exists a positive integer $j$ such that
\[
\delta \deeq \Delta^{j}.
\]
 \end{proof}
Next, for an arbitrary element $V$ in $\mathcal{Z}(G_{m, n})$, there exists a non-negative integer $k$ in $\Z_{\ge0}$ such that $\Delta^{k} \cdot V$ is equivalent to a positive word. Since the localization homomorphism $\pi$ is injective, there exists a unique element $V'$ in $G^{+}_{m, n}$ such that $\pi(V') = \Delta^{k} \cdot V$. The element $V'$ belongs to the center $\mathcal{Z}(G^{+}_{m, n})$. Due to the Claim 2, we show that there exists a positive integer $k'$ such that
\[
\Delta^{k'} = \Delta^{k} \cdot V.
\]
Hence, we have $V = \Delta^{k'-k}$.
 \end{proof}


%

%

%

%

%
\emph{Acknowledgement.}\! 
The author is deeply grateful to Kyoji Saito and Ichiro Shimada for very interesting discussions and their warm encouragement. The author thanks Masahiko Yoshinaga and Yusuke Kuno for helpful discussions and valuable comments. This research is supported by JSPS Fellowships for Young Scientists $(23\cdot10023)$. This researsh is also supported by World Premier International Research Center Initiative (WPI Initiative), MEXT, Japan.

\begin{flushright}
\begin{small}

Department of Mathematical Sciences, \\
University of Tokyo, \\
3-8-1 Komaba Meguro-ku Tokyo, 153-8914 Japan \\
e-mail address :  tishibe@ms.u-tokyo.ac.jp
\end{small}
\end{flushright}
\end{document}